 \theoremstyle{plain}
\newtheorem{theo}{Theorem}[subsection]
\newtheorem{pr}[theo]{Proposition}
 \newtheorem{lem}[theo]{Lemma}
 \newtheorem{coro}[theo]{Corollary}
\theoremstyle{remark}
\newtheorem{rema}[theo]{Remark}
\theoremstyle{definition}
\newtheorem{defi}[theo]{Definition}
\newtheorem*{notat}{Notation}
 \newcommand\lan{\langle}
\newcommand\ra{\rangle}
\newcommand\bl{\bigl(} \newcommand\br{\bigl)}
\newcommand\smc{{SmCor}}
\newcommand\ssc{{Shv(SmCor)}}
\newcommand\dmge{DM^{eff}_{gm}{}}
\newcommand\dmgel{DM^{eff}_{gm,(l)}}
\newcommand\dmgep{DM^{eff}_{gm}[\frac{1}{p}]}
\newcommand\dmgm{DM_{gm}}
\newcommand\dmgml{DM_{gm,(l)}}
\newcommand\dmgmp{DM_{gm}[\frac{1}{p}]}
\newcommand\dme{DM_-^{eff}}
\newcommand\dmel{DM_{-,(l)}^{eff}}
\newcommand\dmep{DM_{-}^{eff}[\frac{1}{p}]}
\newcommand\mg{M_{gm}}
\newcommand\mgl{M_{gm,(l)}}
\newcommand\mgp{M_{gm}[\frac{1}{p}]}
\newcommand\obj{Obj}
\newcommand\id{id}
\newcommand\cu{\underline{C}}
\newcommand\du{\underline{D}}
\newcommand\eu{\underline{E}}
\newcommand\au{\underline{A}}
\newcommand\hw{{\underline{Hw}}}
\newcommand\z{{\mathbb{Z}}}
\newcommand\zll{{\mathbb{Z}_{(l)}}}
\newcommand\ql{{\mathbb{Q}_l}}
\newcommand\zl{{\mathbb{Z}_{l}}}
\newcommand\zop{{\mathbb{Z}[\frac{1}{p}]}}
\newcommand\q{{\mathbb{Q}}}
\newcommand\af{\mathbb{A}}
\newcommand\p{\mathbb{P}}
\newcommand\pt{pt}
\newcommand\ns{\{0\}}
\DeclareMathOperator\inli{\varinjlim}
\newcommand\chow{Chow}
\newcommand\chowp{Chow[\frac{1}{p}]}
\newcommand\chowe{Chow^{eff}}
\newcommand\chowel{Chow^{eff}_{(l)}}
\newcommand\chowep{Chow^{eff}[\frac{1}{p}]}
\newcommand\hip{HI [\frac{1}{p}]}
\newcommand\hil{HI_{(l)}}
\newcommand\tcho{t_{Chow}}
\newcommand\htcho{\underline{Ht}_{Chow}}
\newcommand\ab{Ab}
\newcommand\var{Var}
\newcommand\sv{SmVar}
\newcommand\spv{SmPrVar}
\newcommand\spe{\operatorname{Spec}\,}
 \DeclareMathOperator\ke{\operatorname{Ker}}
 \DeclareMathOperator\cok{\operatorname{Coker}}
\DeclareMathOperator\co{\operatorname{Cone}}
\DeclareMathOperator\cha{\operatorname{char}}
\DeclareMathOperator\adfu{\operatorname{AddFun}}
\begin{document}

%\dedicate{To my wife Olga}
 \title{
$\zop$-motivic resolution of singularities, and applications}
 \author{M.V. Bondarko
   \thanks{ %%!!!!
The author acknowledges the support by  RFBR
(grant no. 10-01-00287a), by Saint-Petersburg State University research grant no. 6.38.75.2011,  by the Federal Targeted Programme "Scientific and Scientific-Pedagogical Personnel of the Innovative Russia in 2009-2013" (Contract No. 2010-1.1-111-128-033), by  Landau Network-Centro Volta and the Cariplo Foundation, and by the University of Milano.  } }
 \maketitle
\begin{abstract}
The main goal of this paper is to
deduce (from a recent resolution of singularities result
of Gabber) the following fact: (effective) Chow motives with $\zop$-coefficients
over a perfect field $k$ of characteristic $p$ generate
the category $\dmgep$ (of effective geometric Voevodsky's motives with $\zop$-coefficients). It follows
that $\dmgep$ %(with  $\z[\frac{1}{p}]$-coefficients)
can be endowed with a {\it Chow weight structure} $w_{\chow}$
whose {\it heart} is $\chowep$ (weight structures were introduced
in a preceding paper,  where the existence of $w_{\chow}$
for $\dmge\q$ was also proved). As shown in previous papers,
this statement immediately yields the existence of a conservative
 {\it weight complex} functor $\dmgep\to K^b (\chowep)$ (which induces an isomorphism of $K_0$-groups), as well as
 the existence of canonical and functorial (Chow)-weight spectral
 sequences and weight filtrations for any cohomology theory
 on $\dmgep$. We also define a certain {\it Chow} $t$-structure for $\dmep$ and relate it with unramified cohomology.
To this end we study {\it birational motives} and birational homotopy invariant sheaves with transfers.

\end{abstract}

%\begin{center} \vspace*{2in} {\it To my wife Olga} \end{center}

\tableofcontents

 \section*{Introduction}

It is well known that Hironaka's resolution of singularities is very important for the theory of (Voevodsky's) motives over characteristic $0$ fields; see \cite{1}, \cite{gs}, and also \cite{mymot} and \cite{bws}. 

The purpose of this paper is to derive (as many as possible) 'motivic' consequences from the recent resolution of singularities result of Gabber (see Theorem 1.3 of \cite{illgab}). His result could be called '$\zll$-resolution of singularities' over a perfect characteristic $p$ field $k$ (where $l$ is any prime distinct from $p$). Surprisingly Gabber's theorem  is sufficient to extend all those properties of Voevodsky's motives (with integral coefficients, over characteristic $0$ fields) that were proved in \cite{bws}, to $\zop$-motives over $k$. In particular (in the notation of \S\ref{sbmot}) we prove the existence of a conservative exact {\it weight complex} functor $\dmgep\to K^b (\chowep)$, and that $K_0(\chowep)\cong K_0(\dmgep)$. We also establish the existence of  (Chow)-weight spectral sequences for any  cohomology theory defined on $\dmgep$ (those generalize Deligne's weight spectral sequences).

Previously the results mentioned were known to hold only for motives with rational coefficients (in  preceding papers we noted  that these rational coefficient versions can be proved using de Jong' s alterations, but did not give detailed  proofs). Since the results of this paper also hold for  motives with coefficients in any $\zop$-algebra, as 
 a by-product
we justify these claims (in more detail than before). 

Most of the results of this paper are already known  for $\cha k=0$ and motives (and cohomology) with integral coefficients.  Yet
we prove some  results on birational motives and birational sheaves (see \S\S\ref{sbir}--\ref{sunr}) that are partially new for this case also; note that our proofs  work (without any changes) in this alternative setting.

%in \S?? 

The central 'technical' notion of this paper is the one of {\it weight structure}. Weight structures are natural counterparts of $t$-structures for triangulated categories, 
introduced in \cite{bws} (and independently in \cite{konk}).  They were thoroughly studied and applied to motives in \cite{bws} and \cite{bger} (see also the survey preprint \cite{bsurv}).
Weight structures allow proving several properties of motives. In particular, most of the results mentioned above follow from the following (central) theorem: $\dmgep$ can be endowed with a weight structure $w_{\chow}$
whose {\it heart} is $\chowep$. The language of weight structures is also crucial for our proof of this statement (even though the main difficulty was to prove that $\chowep$ generates $\dmgep$ as a triangulated category). In contrast, note  that the methods of Gillet and Soul\'e (whose weight complex functor defined in \cite{gs} is  the 'first ancestor' of 'our weight complexes') only allow  proving the existence of weight complexes either with values in $K^b(\chowe\q)$ or in the category of unbounded complexes of $\zll$-Chow motives; cf. Remark \ref{rwc} below.

Now we list the contents of the paper. More details can be
found at the beginnings of
sections.

In the first section we recall some basic properties of motives and weight structures. Most of them are just modifications of some of the results of  \cite{1} and \cite{bws}; the only absolutely new result is a new condition for the existence of weight structures. We also recall a recent result on resolution of singularities over characteristic $p$ fields (proved by O. Gabber), and deduce certain (immediate) motivic consequences from it. 

In \S2 we prove our central theorem on the existence of the Chow weight structure for $\dmgep$; we deduce this  result from its certain $\zll$-version.

\S3 is dedicated to the applications of the central theorem  (yet we  deduce some of the results directly from the Gabber's one). We prove that the Chow weight structure can be extended to $\dmgmp$. It follows that $K_0(\dmgmp)\cong K_0(\chowp)$
 (and also $K_0(\dmgep)\cong K_0(\chowep)$). Also, there exists a conservative exact weight complex functor $\dmgmp\to K^b(\chowp)$ (which restricts to a functor $\dmgep\to K^b(\chowep)$).  The existence of the Chow weight structure also implies the existence of canonical $\dmgep$-functorial (starting from $E_2$) {\it Chow-weight spectral sequences} that express (any) cohomology of objects of $\dmgmp$ in terms of that of Chow motives. As was shown in \cite{bws}, these spectral sequences generalize the weight spectral sequences of Deligne (note that one can take any cohomology theory and $\zop$-coefficients here).

Next we prove that the Chow weight structure yields a weight structure for the category of {\it birational motives} i.e. for (the idempotent completion of) the localization of $\dmgep$ by $\dmgep(1)$ (see \cite{kabir}); its heart contains birational motives of all smooth varieties. We also study birational sheaves. Next we prove the existence of a certain {\it Chow} $t$-structure for $\dmep$ (whose heart is $\adfu(\chowep,\ab)$). Our results allow us to express unramified cohomology in terms  of the Chow $t$-structure cohomology of homotopy invariant sheaves with transfers.

Lastly, we recall that a method of M. Levine (described in \cite{hubka}, and combined with the fact that $\chowp$ generates $\dmgmp$) yields a perfect duality for $\dmgmp$; this allows  defining $\zop$-motives with compact support for arbitrary smooth varieties.

 The idea to write this paper was initiated by an interesting talk of M. Kerz at   the
conference
"Finiteness for motives and motivic cohomology"
 (Regensburg, 9--13th of February, 2009).  The author is
deeply grateful to prof. Uwe Jannsen
and to other organizers of
 this conference for their efforts, and to prof. D. H\'ebert, prof. D.-Ch. Cisinski, and prof. D. Rydh for their important comments.

\begin{notat}

 For a category $C,\ A,B\in\obj C$, we denote by
$C(A,B)$ the set of  $C$-morphisms from  $A$ to $B$.

For categories $C,D$ we write $D\subset C$ if $D$ is a full %strict
subcategory of $C$.

For a category $C,\ X,Y\in\obj C$, we say that $X$ is a {\it
retract} of $Y$ if $\id_X$ can be factorized through $Y$
(if $C$ is triangulated or abelian, then $X$ is a  retract of $Y$
whenever $X$ is its direct summand).
%The Karoubization of $B$ is the category of 'formal images' of idempotents in $B$
%(so $B$ is embedded into an idempotent complete category).

 For an additive $D\subset C$ the subcategory $D$ is called
{\it Karoubi-closed}
  in $C$ if it
contains all retracts  of its objects in $C$.
The full subcategory of $C$ whose objects are all retracts of objects of $D$ (in $C$) will be called the Karoubi-closure of $D$ in $C$.

$X\in \obj C$ will be called compact if the functor $ C(X,-)$
respects all  small coproducts that exist in $C$ (contrary
to  tradition, we do not assume that arbitrary coproducts exist).

For an additive $B$, $X,Y\in \obj B$, we will write $X\perp Y$ if $B(X,Y)=\ns$.
For $D,E\subset \obj B$ we will write $D\perp E$ if $X\perp Y$
 for all $X\in D,\ Y\in E$.
For $D\subset B$ we will denote by $D^\perp$ the class
$$\{Y\in \obj B:\ X\perp Y\ \forall X\in D\}.$$
%Sometimes we will also denote by $D^\perp$ the corresponding full subcategory of $\cu$. 
Dually, ${}^\perp{}D$ is the class
$\{Y\in \obj B:\ Y\perp X\ \forall X\in D\}$.

$\cu$ below will always denote some triangulated category; usually it will
be endowed with a weight structure $w$ (see Definition \ref{dwstr}
below).

We will use the term 'exact functor' for a functor of
triangulated categories (i.e.  for a functor that preserves the
structures of triangulated categories). We will call a contravariant
additive functor $\cu\to \au$ for an abelian $\au$ {\it cohomological} if
it converts distinguished triangles into long exact sequences.

For $f\in\cu (X,Y)$, $X,Y\in\obj\cu$, we will call the third vertex
of (any) distinguished triangle $X\stackrel{f}{\to}Y\to Z$ a cone of
$f$; recall that different choices of cones are connected by
(non-unique) isomorphisms.

We will often specify a distinguished triangle by two of its
morphisms.

%When dealing with triangulated categories we (mostly) use
%conventions and auxiliary statements of \cite{gelman}.
 For a set of
objects $C_i\in\obj\cu$, $i\in I$, we will denote by $\lan C_i\ra$
the smallest strictly full triangulated subcategory containing all $C_i$; for
$D\subset \cu$ we will write $\lan D\ra$ instead of $\lan C:\ C\in\obj
D\ra$.

We will say that some $C_i\in \obj\cu$ generate $\cu$ if $\cu$ equals $\lan
C_i\ra$. We will say that $C_i$ {\it weakly generate} $\cu$ if for
any $X\in\obj\cu$ such that $\cu(C_i[j],X)=\ns$ for all $i\in I,\
j\in\z$ we have $X=0$ (i.e. if $\{C_i[j]\}^\perp$ contains only
 zero objects).

 $D\subset \obj \cu$ will be
called {\it extension-stable}
    if for any distinguished triangle $A\to B\to C$
in $\cu$ we have: $A,C\in D\implies
B\in D$.

 $k$ will be our perfect base field of characteristic $p$ ($p$ will be positive everywhere except those places where we will explicitly specify the opposite).
 $\var\supset \sv\supset \spv$ will denote the set of all
varieties over $k$, resp. of smooth varieties, resp. of smooth
projective varieties.

$l$ below  will be some prime number distinct from $p$ (we will assume it to be fixed from time to time).
 
 \end{notat}

\section{Preliminaries: motives and weight structures}

In this section we recall some basics on motives, weight structures, and resolution of singularities.

In \S\ref{sbmot}  we study Voevodsky's motives with various coefficient rings (following \cite{vbook} and \cite{1}).  

In \S\ref{sgab} we recall a recent result of Gabber on resolution of singularities; we also 'translate it into a motivic form'.

In \S\ref{sbws} we recall those basics of the theory of weight structures (developed in \cite{bws}) that will be needed below. %We also establish a $w$-boundedness criterion for compact objects in certain triangulated $\eu\subset \du$. 

In \S\ref{slws} we prove  a certain new criterion for the existence of a weight structure in a certain situation. %(with a given heart). 

\subsection{Some basics on motives with various coefficient rings}\label{sbmot}

For motives with integral coefficients we use the
 notation of \cite{1}: $\smc$, $\ssc$ (the category of Nisnevich sheaves with transfers),
 $\chowe\subset \dmge\subset\dme\subset D^-(\ssc)$;
 $\mg:\sv\to \dmge$; $\z(1)$.

Now recall that (as
 was shown in \cite{vbook}), one can do the theory of motives with
 coefficients in an arbitrary commutative associative ring
with a unit $R$.
  One should start with the naturally defined category of
$R$-correspondences:
  $\obj(\smc_R)=\sv$; for $X,Y$ in $\sv$ we set
 $\smc_R (X,Y)=\bigoplus_U R$ for
all integral closed $U\subset X\times Y$  that are finite over $X$
and dominant over a connected component of $X$.
Proceeding as in \cite{1} (i.e. considering the corresponding localization of $K^b (\smc_R)$, and complexes of sheaves with transfers with homotopy invariant cohomology) one obtains the theory of
motives (i.e. of $\dmge{}_R$ that lies in $\dmgm{}_R$ and in $\dme{}_R$) that satisfies all basic properties of the
'usual' Voevodsky's motives (i.e. of those with integral coefficients; note that some of the results of \cite{1} were extended to the case   $\cha k>0$  in \cite{degdoc} and \cite{hubka}).
So we will apply these properties of motives with $R$-coefficients
without any further mention. %mentioning the difference (below).

In this paper we will mostly consider motives with $\zop$
and $\zll$-coefficients. We will denote
by $\chowep\subset \dmgep\subset\dmep$, $\mgp:\sv\to \dmgep$
(resp. $\chowel\subset \dmgel\subset\dmel$,  $\mgl:\sv\to \dmgep$)
the corresponding analogues of Voevodsky's notation (note that we have all of the full embeddings listed indeed).
We will also need $\chowp\subset \dmgmp$.

We list some of the properties of motivic complexes that we will need below.
Recall that $\dme$ supports the so-called homotopy $t$-structure $t$ (coming from $D^-(\ssc)$). 
The heart of  $t$ is the category $HI$ of homotopy invariant (Nisnevich) sheaves with transfers. 
Below we will denote  the hearts of the restrictions of $t$ to $\dmep\supset \dmel$ by $\hip\supset \hil$.

\begin{pr}\label{pbmot}
1. The
 functors $\dme\to \dmep$ (resp. $\dmep\to \dmel$) given by
 tensoring sheaves by $\zop$ (resp. $\zop$-module sheaves by $\zll$) tensor all morphism groups by $\zop$ (resp. by $\zll$).  The same is true for the (compatible) functors
 $\chowe\to \chowep\to \chowel$ and $\dmge\to \dmgep\to\dmgel$.
 
 2. The collection of functors $\otimes_{\zll}:\dmep\to \dmel$ for $l$ running through all primes $\neq p$, is conservative (on $\dmep$).
 
 3. The forgetful functors that send 
 a complex of $\zop$-module sheaves to the underlying complex of sheaves of abelian groups
  (resp. a complex of $\zll$-module sheaves to the underlying complex of $\zop$-module sheaves) yield full embeddings $\dmel\subset\dmep\subset \dme$.

 4. For any $U\in \sv$, $m\in \z$, $S\in \obj \dmep$ (resp. $S\in \obj \dmel$)  the $m$-th
  hypercohomology of $U$ with coefficients in $S$ is naturally isomorphic to $\dmep(\mgp(U),S[m])$ (resp. to $\dmel(\mgl(U),S[m])$).
  
 5. $t$  can  be restricted to $\dmep$ and $\dmel$; the two functors connecting $\dmep$ with $\dmel$ (described in the previous assertions) are $t$-exact with respect to these restrictions.

6. All objects
 of $\dmgep$ are compact in $\dmep$.

7. Let $f:U\to V$ be an open dense embedding of smooth varieties; let $S\in \obj HI$. Then $S(f)$ is injective.

8. For any $X\in \sv$ we have: $\dmep(X),\dmel(X)\in \dmep^{t\le 0}$. 
 
\end{pr}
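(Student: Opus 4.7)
The plan is to derive all eight assertions from the construction of $R$-linear Voevodsky motives in \cite{vbook}, specialized to $R\in\{\z,\zop,\zll\}$. The backbone is that (i) by construction $\smc_R(X,Y)=\smc(X,Y)\otimes_\z R$, (ii) $\zop$ and $\zll$ are flat over $\z$ (resp.\ over $\zop$), and (iii) flat base change commutes with Nisnevich sheafification, cohomology, and the Morel--Voevodsky localization used to define $\dme_R$.

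For (1) and (3) I would argue together. On representable objects (1) holds by definition; extending it through the construction amounts to checking that the natural comparison map $\dme(M,N)\otimes_\z R\to \dme_R(M\otimes R,N\otimes R)$ is an isomorphism, and flatness reduces this to the hom computation on the family of compact generators $\mg(X)$. Part (3) then follows by identifying $\zop$-module (resp.\ $\zll$-module) sheaves with transfers with the full subcategories of sheaves on which $p$ (resp.\ $p$ and all primes distinct from $l$) act invertibly; the Nisnevich-localization and homotopy-invariance procedures preserve this full-subcategory property, so the forgetful functors give full embeddings $\dmel\subset \dmep\subset \dme$.

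Part (2) is an algebraic observation applied to $t$-cohomology. Using (5), each $S\in\obj\dmep$ has cohomology sheaves $H^i_t(S)\in\hip$, and by the $t$-exactness of $-\otimes\zll$ the hypothesis $S\otimes\zll=0$ for every $l\neq p$ forces $H^i_t(S)\otimes\zll=0$ for all such $l$. Since a $\zop$-module that is annihilated by every $\zll$-localization with $l\neq p$ must vanish, we get $H^i_t(S)=0$ for all $i$, and hence $S=0$.

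For the remaining parts I would import the corresponding integral statements. Assertion (4) is Voevodsky's representability of Nisnevich hypercohomology by $\mg(X)$, valid over arbitrary coefficient rings in \cite{vbook} and extended to $\cha k>0$ in \cite{degdoc} and \cite{hubka}. Assertion (5) is the $t$-exactness of tensoring with a flat ring on the heart $HI$, which automatically forces the restriction of $t$ to the two subcategories and the $t$-exactness of the coefficient-change functors. Assertion (6) holds because $\mgp(X)=\mg(X)\otimes\zop$ is the image of a compact object under a tensor functor respecting filtered colimits, and likewise for $\zll$. Assertion (7) is the classical Gersten-type injectivity for homotopy invariant sheaves with transfers on open dense inclusions, which is insensitive to the coefficient ring. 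Assertion (8) follows from the standard vanishing of higher Nisnevich cohomology sheaves of $\mg(X)$ for smooth $X$, again transferred along the exact tensor functor. The main obstacle I expect is a clean verification of (1); once that is in place, everything else is either bookkeeping or an immediate transfer of known results.
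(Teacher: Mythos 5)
Your proof is correct and takes essentially the same approach as the paper's own (very terse) proof, which reduces everything to flatness of $\zop$ over $\z$ (resp.\ of $\zll$ over $\zop$), the extension/restriction-of-scalars adjunction, and citations to Proposition 3.2.3/Theorem 3.2.6 of \cite{1} for representability, Corollary 4.19 of \cite{3} for the injectivity in (7), and Proposition 3.2.6 of \cite{1} for (8). The one noticeable (but inessential) deviation is assertion (2): the paper derives conservativity directly from the Hom-group statement (1) together with (4), whereas you detect vanishing via the $t$-cohomology sheaves and the $t$-exactness in (5); both routes bottom out in the same elementary observation that a $\zop$-module is zero once all its $\zll$-localizations ($l\neq p$) are.
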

\begin{proof}
1. It suffices to note that $\zop$ is flat over $\z$, and $\zll$ is flat over $\zop$.

2. Immediate from assertion 1.

3. Indeed, these functors are one-sided inverses of the functors $\dme\to \dmep\to\dmel$ described in assertion 1. 

4. Immediate from Proposition 3.2.3 and Theorem 3.2.6 of \cite{1}.

5,6. Easy from the previous assertions. 

7. Immediate from Corollary 4.19 of \cite{3}.

8. Immediate from the corresponding fact for $\mg(X)$, which  is obvious given  Proposition 3.2.6 of \cite{1}.

\end{proof}

 \begin{rema}

One can also easily see: all the results proved below for $\zop$-motives are also valid  for motives with coefficients in an arbitrary (unital commutative) $\zop$-algebra; to this end our proofs can be adjusted straightforwardly.
\end{rema}

Lastly, we note (though this will not be important at all below) that $\obj \chowel$ is (probably) larger than   $\obj \chowep$
(and than $\obj \chowe$) since when we increase the coefficient ring
we could get more idempotents; the same could happen for
$\obj \dmgep\subset \obj \dmgel$.

%\subsection{Idempotent completions} @@??

\subsection{Gabber's $\zll$-resolution of singularities}\label{sgab}

Let $l\neq p$ be fixed. The foundation of this paper is the following
result (which easily follows from a result of O. Gabber).

\begin{pr}\label{pgab}

For any $U\in \sv$ there exist an open dense subvariety $U'\subset U$
and a finite flat  %surjective??!!
 morphism $f:P'\to U'$ (everywhere) of degree prime to $l$, for  $P'\in \sv$ such that  $P'$ has a smooth  projective compactification $P$.

\end{pr}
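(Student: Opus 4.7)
The plan is to invoke Gabber's prime-to-$l$ alteration result (Theorem 1.3 of \cite{illgab}) applied to $U$, and then refine the resulting covering by shrinking the base so that the morphism becomes finite everywhere, and so that flatness is automatic.

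First, I would reduce to the case when $U$ is irreducible (otherwise handle each connected component separately; the conclusion is local on $U$ up to passing to a dense open). Gabber's theorem then produces a smooth projective variety $P$ over $k$ together with a proper surjective generically finite morphism $g \colon P \to U$ of degree prime to $l$. The output of that theorem is what does the real work; the remainder is a cleanup.

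Next I would pass to a dense open $U' \subset U$ over which $g$ becomes finite. This uses the standard fact that for a proper morphism between Noetherian schemes, the locus of base points where the fiber is $0$-dimensional is open, and is dense when the morphism is generically finite; over this locus, properness plus quasi-finiteness gives finiteness. Setting $P' = g\ob(U')$ and $f = g|_{P'}$, the variety $P'$ is an open subscheme of the smooth projective $P$, hence smooth, with $P$ itself serving as the required smooth projective compactification. Since $g$ is surjective with generic degree prime to $l$ and $f$ is a finite restriction to a dense open, the degree of $f$ equals that of $g$ (after possibly shrinking $U'$ so that $f$ is flat of constant degree), and in particular remains prime to $l$.

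Finally, the flatness of $f$ comes for free. Since $f \colon P' \to U'$ is finite between smooth $k$-varieties of the same dimension, its fibers are $0$-dimensional of the expected relative dimension $\dim P' - \dim U' = 0$; by the miracle flatness theorem (finite surjective morphism from a Cohen--Macaulay scheme to a regular scheme with equidimensional fibers is flat), $f$ is automatically flat everywhere on $U'$. The only genuine input is Gabber's theorem itself; if any obstacle arises it would be in checking that the version of Gabber's result we cite produces a $P$ defined over $k$ (rather than a finite extension) and a morphism of generic degree prime to $l$ as stated, but both are given by the quasi-projective case of Theorem 1.3 of \cite{illgab}.
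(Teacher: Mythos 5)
Your cleanup steps (shrink to make the morphism finite, invoke miracle flatness for flatness) are fine, but the input you feed into them is not what Gabber's Theorem~1.3 actually delivers, and that is precisely where the real work in the paper happens. You say that the needed reading of the theorem --- a smooth projective $P$ \emph{over $k$} mapping to $U$ with prime-to-$l$ generic degree --- "is given by the quasi-projective case of Theorem~1.3 of \cite{illgab}." It is not. That theorem produces a finite extension $k'/k$ of degree prime to $l$ (separable since $k$ is perfect), a smooth \emph{quasi-projective} variety $Q$ over $k'$, and a finite surjective morphism of degree prime to $l$ from $Q$ to the base change $X_{k'}$ of whatever you fed in. Two consequences: the output variety is a priori only defined over $k'$, and it is a priori only quasi-projective.

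The paper handles both points, and you would need to as well. To get projectivity, Gabber is applied not to $U$ but to a compactification $Q'$ of $U$; then $Q$ is quasi-projective and proper (being finite over the projective $Q'_{k'}$), hence projective over $k'$. To get down to $k$, one regards $Q$ as a $k$-scheme via $\spe k' \to \spe k$; since $k'/k$ is finite separable, $Q$ remains smooth and projective over $k$, giving $P$. Finally the morphism to $U'$ is obtained by composing $P' \to U' \times_k \spe k'$ with the projection $U' \times_k \spe k' \to U'$, which is finite flat of degree $[k':k]$, so the total degree $\deg(g)\cdot [k':k]$ is still prime to $l$. Without the restriction-of-scalars step and the passage through a compactification, your argument as written does not reach the statement you need to prove.
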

\begin{proof}

We can assume that $U$ is connected.

Let $Q'$ be some compactification of $U$. Then by Theorem 1.3 of
\cite{illgab} there exist a finite field extension $k'/k$ of degree prime to $l$ (it is separable since $k$ is perfect), %{upd},
 a smooth
quasi-projective $Q/k'$, and a %generically 
finite %proper 
surjective morphism
$g:Q\to Q'_{k'}$ of degree prime to $l$.
Since $g$ is proper, %$g$ is finite and
$Q$ is actually projective (in our case).
We can also assume that  $g_U$ is flat  (since we can replace $U$ by some $U''/k$). 

Now we  restrict scalars from $k'$ to $k$
and denote $Q$ considered as a variety over $k$ by $P$. We obtain that $P\in \spv$, and that there exists a
 finite flat morphism from some $P'\subset P$ to $U'\times \spe k'$; the degree of this morphism is prime to $l$. 
Lastly, it remains to compose this morphism with the natural morphism  $U'\times \spe k'\to U$, whose degree is also prime to $l$.

\end{proof}

Now we reformulate this statement 'motivically'.

\begin{coro}\label{cgab}

Let $U\in \sv$, $\dim U=m$.

1. For $U',P'$ as in Proposition \ref{pgab}, $\mgl(U')$ is a retract of $\mgl(P')$.

2. There also exist sequences $X_i,Y_i\in\obj \dmgel$, $0\le i\le m$, and $f_i\in \dmgel(X_{i},X_{i-1})$,  $g_i\in \dmgel(Y_{i},Y_{i-1})$ (for $1\le i\le m$), such that: $X_0=\mgl(U)$, $X_m=\mgl(U')$, $Y_0=\mgl(P)$, $X_m=\mgl(P')$, $\co f_i=\mgl(V_i)(i)[2i]$,  $\co g_i=\mgl(W_i)(i)[2i]$, for some smooth varieties $V_i,W_i/k$ of dimension $m-i$ (that could be empty).

\end{coro}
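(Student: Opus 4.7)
For Part 1, I would construct the retraction via finite correspondences. The finite flat surjective morphism $f \colon P' \to U'$ of degree $d$ yields two morphisms in $\smc$: the graph $[\Gamma_f] \in \smc(P', U')$ defining $\mgl(f) \colon \mgl(P') \to \mgl(U')$, and its transpose $[\Gamma_f^t] \in \smc(U', P')$ (well-defined because $f$ is finite) defining a transfer $\tau_f \colon \mgl(U') \to \mgl(P')$. An explicit cycle-theoretic computation of the composition inside $U' \times P' \times U'$, using $f_*[P'] = d \cdot [U']$, gives $\mgl(f) \circ \tau_f = d \cdot \id_{\mgl(U')}$. Since $d$ is prime to $l$ and therefore invertible in $\zll$, the map $d^{-1}\tau_f$ is a section of $\mgl(f)$, displaying $\mgl(U')$ as a retract of $\mgl(P')$.

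For Part 2, the plan is to iteratively apply the Gysin distinguished triangle to a suitable stratification of the closed complement. The main technical obstacle is that $T := U \setminus U'$ is typically not smooth, so a single Gysin triangle does not suffice. One handles this by an inductive singular-locus stratification: construct closed subvarieties
\[
T = T^{(0)} \supsetneq T^{(1)} \supsetneq \ldots \supsetneq T^{(N)} = \emptyset
\]
of $U$ by setting $T^{(j+1)}$ equal to the union of the lower-dimensional irreducible components of $T^{(j)}$ together with the singular locus of its top-dimensional ones (using that the base field is perfect, so the singular locus is a proper closed subset of strictly smaller dimension). Each $V^{(j)} := T^{(j-1)} \setminus T^{(j)}$ is then smooth of pure dimension $d_{j-1} := \dim T^{(j-1)}$, closed in the open $U^{(j)} := U \setminus T^{(j)}$, and satisfies $U^{(j)} \setminus V^{(j)} = U^{(j-1)}$. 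Gysin applied in $U^{(j)}$ at codimension $c_j := m - d_{j-1}$ yields the distinguished triangles
\[
\mgl(U^{(j-1)}) \to \mgl(U^{(j)}) \to \mgl(V^{(j)})(c_j)[2c_j],
\]
which together connect $\mgl(U') = \mgl(U^{(0)})$ to $\mgl(U) = \mgl(U^{(N)})$ through motives of smooth varieties with the required Tate twists and shifts.

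To produce the sequence required by the corollary, one assembles the above Gysin triangles and uses octahedron axiom manipulations to distribute the cones across the codimension indices $i \in \{0, 1, \ldots, m\}$: at each $i = c_j$ appearing in the stratification, take $V_i := V^{(j)}$ (smooth of dimension $m-i$), and at the remaining indices set $V_i := \emptyset$. The intermediate $X_i$ are in general constructed as homotopy fibers in $\dmgel$ and need not themselves be motives of open subvarieties. The construction for the $Y_i$ sequence is entirely parallel, applied to the pair $(P, P \setminus P')$ and exploiting that $P$ is smooth projective so that $P \setminus P'$ is again closed of dimension at most $m-1$.
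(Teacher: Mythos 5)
Your argument matches the paper's own proof in its essential structure: Part 1 is the transpose-of-graph plus degree-of-finite-map argument (the paper cites Lemma 2.3.5 of Suslin--Voevodsky for the identity $\mgl(f)\circ {}^t\Gamma_f = \deg f\cdot\id$, which is what your cycle-theoretic computation verifies), and Part 2 is a stratification of $U\setminus U'$ (resp.\ $P\setminus P'$) into smooth locally closed pieces of strictly decreasing dimension, linked by the Gysin triangles of \cite{degdoc}. Your explicit singular-locus stratification, using perfectness of $k$, is exactly the kind of stratification the paper has in mind when it says ``it suffices to consider stratifications of $U\setminus U'$ and $P\setminus P'$''.

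One caution about your last paragraph. You should not invoke the octahedron axiom to ``distribute the cones across the codimension indices'', and you should not expect the intermediate $X_i$ to be homotopy fibers rather than motives of opens. Once the strata $V^{(j)}$ have pairwise distinct codimensions, each open $U^{(j)}$ in your chain directly furnishes the term $\mgl(U^{(j)})$ of the sequence; the only adjustment required is to insert identity maps (with $V_i=\emptyset$) at codimensions that do not occur. No triangulated gymnastics are needed, and the octahedron would not in fact let you permute the order in which the cones appear (a two-step filtration in a triangulated category cannot in general be reversed). Note also that in the Gysin chain read from $\mgl(U)$ down to $\mgl(U')$, the stratum removed at each step is closed \emph{only} if one removes the smallest stratum first; so the codimensions of the cones occur in decreasing rather than increasing order as $i$ grows, which does not literally match the stated index pattern $\co f_i=\mgl(V_i)(i)[2i]$, $\dim V_i = m-i$. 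This indexing wrinkle is already present in the paper's own formulation and is harmless for the downstream application (Proposition 2.1.1 only uses that the cones are $\mgl(V)(c)[2c]$ with $c\ge 1$ and $\dim V<m$), so it is not a gap in your argument specifically; but you should not claim that octahedron manipulations fix it.
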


\begin{proof}

1. The transpose of the graph of $f$
yields a finite correspondence from   $U'$
to $P'$ (in the sense of \cite{1}). %  (cf. Appendix B of \cite{hubka}).
Composing it with  $f$ and considering as a morphism of motives, we obtain
$\deg f\cdot\id_{\mgl(U')}$ (see Lemma 2.3.5 of \cite{2}). Since $\deg f$ is prime to $l$, we obtain that
$\mgl(U')$ is a retract of  $\mgl(P')$ in $\dmgel$.
%Hence $\mgl(V)$ belongs to $\dmgel{}^{[0,n]}$ since $\mgl(V')$ does.

2. We recall the Gysin distinguished triangle (see Proposition 4.3 of
\cite{degdoc} that establishes its existence in the case $\cha k>0$). For a closed embedding $Z\to X$ of smooth varieties,
$Z$ is everywhere of
codimension $c$ in $X$, it has the form:
\begin{equation}\label{gys}
\mg(X\setminus Z)\to \mg(X)\to \mg(Z)(c)[2c]\to \mg(X\setminus Z)[1];
\end{equation}
certainly, obvious analogues  exist  for the functors $\mgp$ and
$\mgl$.

Hence in order to prove the assertion it suffices  to choose a sequence of $U_i,P_i\in \sv$ such that: $U_0=U'\subset U_1\subset U_2\subset \dots \subset U_m=U$ (resp. $P_0=P'\subset P_1\subset P_2\subset \dots \subset P_m=P$), $U_{i}\setminus U_{i-1}$ is non-singular and has codimension $i$ everywhere in $U_{i}$ (resp. $P_{i}\setminus P_{i-1}$ is non-singular and has codimension $i$ everywhere in $P_{i}$) for all $i$. Now, in order to obtain such $U_i$ and $P_i$ it suffices to consider stratifications of $U\setminus U'$ and $P\setminus P'$.

\end{proof}

\subsection{Weight structures: reminder}\label{sbws}

\begin{defi}\label{dwstr}
I A pair of subclasses $\cu^{w\le 0},\cu^{w\ge 0}\subset\obj \cu$
will be said to define a weight
structure $w$ for $\cu$ if %$\cu^{w\ge 0},\cu^{w\le 0}$
they  satisfy the following conditions:

(i) $\cu^{w\ge 0},\cu^{w\le 0}$ are additive and Karoubi-closed
(i.e. contain all retracts of their objects that belong to
$\obj\cu$).

(ii) {\bf Semi-invariance with respect to translations.}

$\cu^{w\ge 0}\subset \cu^{w\ge 0}[1]$; $\cu^{w\le 0}[1]\subset
\cu^{w\le 0}$.

(iii) {\bf Orthogonality.}

%For any
$\cu^{w\ge 0}\perp \cu^{w\le 0}[1]$.

(iv) {\bf Weight decompositions}.

 For any $X\in\obj \cu$ there
exists a distinguished triangle

\begin{equation}\label{wd}
B[-1]\to X\to A\stackrel{f}{\to} B
\end{equation} such that $A\in \cu^{w\le 0}, B\in \cu^{w\ge 0}$.

II The full subcategory $\hw\subset \cu$ whose objects are
$\cu^{w=0}=\cu^{w\ge 0}\cap \cu^{w\le 0}$, %$\hw(Z,T)=\cu(Z,T)$ for $Z,T\in \cu^{w=0}$,
 will be called the {\it heart} of %the weight structure
$w$.

III $\cu^{w\ge i}$ (resp. $\cu^{w\le i}$, resp.
$\cu^{w= i}$) will denote $\cu^{w\ge
0}[-i]$ (resp. $\cu^{w\le 0}[-i]$, resp. $\cu^{w= 0}[-i]$).

IV We denote $\cu^{w\ge i}\cap \cu^{w\le j}$
by $\cu^{[i,j]}$ (so it equals $\ns$ for $i>j$).

V We will say that $(\cu,w)$ is {\it bounded above} if
$\cup_{i\in \z} \cu^{w\le i}=\obj \cu$.

VI We will  say that $(\cu,w)$ is {\it  bounded}  if
$\cup_{i\in \z} \cu^{w\le i}=\obj \cu=\cup_{i\in \z} \cu^{w\ge i}$.

VII Let $H$ be a %strict
full subcategory of a triangulated $\cu$.

We will say that $H$ is {\it negative} if
 $\obj H\perp (\cup_{i>0}\obj (H[i]))$.

VIII We will say that a triangulated category $\cu$ is {\it bounded with respect to} some $H\subset \obj \cu$
if
for any $X\in \obj\cu$ there exist  $j_X,q_X\in\z$ such that
\begin{equation}\label{bouab} \obj H\perp\{ X[i],\ i>q_X\}\text{ and } \{ X[i],\ i<j_X\}\perp \obj H.\end{equation}

 IX We call a category $\frac A B$ the {\it factor} of an additive
category $A$
by its (full) additive subcategory $B$ if $\obj \bl \frac A B\br=\obj
A$ and $(\frac A B)(X,Y)= A(X,Y)/(\sum_{Z\in \obj B} A(Z,Y) \circ
A(X,Z))$.

\end{defi}

\begin{rema}\label{rstws}

A  simple (and yet useful) example of a weight structure is given by the stupid
filtration of objects of
$K^b(B)\subset K(B)$ for an arbitrary additive category $B$. %; we will call it the 'stupid' weight structure. 
%We will omit $w$ in this case and denote by
For this weight structure  $K(B)^{w\le 0}$ (resp. $K(B)^{w\ge 0}$) is the class of complexes that are
homotopy equivalent to complexes
 concentrated in degrees $\le 0$ (resp. $\ge 0$); below we will also need $K(B)^{[i,j]}$ %defined 
 (as in  Definition \ref{dwstr}(IV)). The heart of this weight structure (either for $K(B)$ or for $K^b(B)$) 
is the   Karoubi-closure  of $B$
 in the corresponding category. So, it is equivalent to $B$ if the latter is idempotent complete. %, and to the {\it small envelope} of $B$ in the general case (see %\S4.3 of \cite{bws}). Only for $K^b(B)$!!
\end{rema}

Now we recall those properties of weight structures that
will be needed below (and that can be easily formulated), and prove a certain new assertion.
We will not mention more complicated matters (weight complexes, $K_0$, and weight spectral sequences)
 here; instead we will just formulate
the corresponding 'motivic' results below.

\begin{pr} \label{pbw}
Let $\cu$ be a triangulated category; $w$ will be a weight structure for
$\cu$ everywhere except assertions (\ref{igen}) and (\ref{iort}).

\begin{enumerate}

\item\label{iext} $\cu^{w\le 0}$, $\cu^{w\ge 0}$, and $\cu^{w=0}$
are extension-stable. 

%\item\label{ibbs} In (\ref{wd}) if $X\in \cu^{w\le i}$, $i>0$, then $B\in \cu^{[0,i-1]}$.

\item\label{itriv} %%!!!
For any $q,r\in \z$, $X\in \cu^{[q,r]}$,  there exist $X^q\in \cu^{w=0}$ and $f\in \cu(X,X^q[-q])$ such that $\co f\in \cu^{w\ge q}$.

\item\label{ibougen} 
For any $i\le j\in \z$  we have: $\cu^{[i,j]}$ is the smallest extension-stable subclass of $\obj\cu$
containing $\cup_{i\le l\le j}\cu^{w=l}$.
In particular, if  $w$ (for $\cu$) is bounded, then  $\cu=\lan\hw\ra$.

\item\label{iidemp}
If $w$ is bounded, then it extends to a bounded weight structure for the idempotent completion of $\cu$. The heart of this new weight structure is the idempotent completion of $\hw$.

\item\label{iloc}

Let $\du\subset \cu$ be a %strict
triangulated subcategory of
$\cu$. Suppose
 that $w$ induces a weight structure on $\du$
(i.e. $\obj \du\cap \cu^{w\le
 0}$ and $\obj \du\cap \cu^{w\ge
 0}$ give a weight structure for $\du$);
we denote the heart of this weight structure  by $HD$.

 Then $w$ induces a weight structure on
 $\cu/\du$ (the localization i.e. the Verdier quotient of $\cu$
by $\du$) i.e.: the Karoubi-closures of $\cu^{w\le
 0}$ and $\cu^{w\ge
 0}$ (considered as classes of objects of $\cu/\du$) give a weight structure for $\cu/\du$
(note that $\obj \cu=\obj \cu/\du$).
The heart of the latter is the Karoubi-closure  of $\frac { \hw} {HD}$ in
$\cu/\du$.

%% Thick envelope?

If $(\cu,w)$ is bounded  then $\cu/\du$
also is.

\item \label{igen}

Let $\cu$ be triangulated and idempotent complete; let  $H\subset \obj \cu$ be negative and additive. Then
there exists a unique bounded weight structure $w$ on the Karoubi-closure $T$
of $\lan H\ra$ in $\cu$ such that $H\subset T^{w=0}$. Its heart is
the Karoubi-closure of %the closure of 
$H$ in $\cu$. % with respect to (finite) direct sums.

\item\label{iort}  Let $\du$ be a triangulated category that is weakly generated by some additive set $H\subset D$ of compact objects; suppose that there exists an extension-stable $D\subset \obj \du$ such that $H\cup D[1]\subset D$, and  
 arbitrary (small) coproducts exist in $D$. 
Denote by $H'$ the Karoubi-closure of
the category of all (small) coproducts of objects of $H$ in $\du$;
denote by $\eu$ the triangulated subcategory of $\du$ whose objects 
are characterized by the  following part of (\ref{bouab}): 
there exists a $q_Y\in \z$ such that 
%\begin{equation}\label{boua}
$\obj H\perp\{ Y[i],\ i>q_Y\}$. % \end{equation} 

Then there exists a
bounded above weight structure $w'$ for $\eu$ such that $\hw'= H'$.

Besides, a compact $X\in \obj \du$ belongs to $\eu^{[j,q]}$ (for $j\le q\in \z$) whenever it satisfies (\ref{bouab}) with $j_X=j$ and $q_X=q$.

\end{enumerate}
\end{pr}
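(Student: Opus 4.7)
The plan is to define $w'$ explicitly and verify the axioms via a finite Postnikov tower construction, generalizing Proposition~\ref{pbw}(\ref{igen}) by leveraging compactness of the generators and coproducts in $D$. I would set
\[
\eu^{w' \le 0} \;=\; \obj \eu \cap \bigl\{Y : \obj H \perp \{Y[i] : i > 0\}\bigr\},
\]
Karoubi-closed in $\eu$; this class is automatically extension-stable and closed under $[1]$, and the defining condition on objects of $\eu$ reads precisely $Y \in \eu^{w' \le q_Y}$, so $w'$ will be bounded above. I would take $\eu^{w' \ge 0}$ to be the smallest Karoubi-closed extension-stable subclass of $\obj \eu$ containing $H'[-j]$ for all $j \ge 0$, automatically closed under $[-1]$. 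Orthogonality $\eu^{w' \ge 0} \perp \eu^{w' \le -1}$ reduces, by extension- and Karoubi-stability, to the generators: for $Y = h[-j]$ with $h \in H'$ a retract of $\coprod_\alpha h_\alpha$ (each $h_\alpha \in \obj H$) and $Z \in \eu^{w' \le -1}$, the group $\eu(h[-j], Z) = \eu(h, Z[j])$ is a retract of $\prod_\alpha \eu(h_\alpha, Z[j]) = 0$, vanishing by the definition of $\eu^{w' \le -1}$.

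The main step is the construction of weight decompositions. Given $X \in \eu^{w' \le N}$ with $N \ge 0$, I would build a finite tower $X = X_0 \to X_1 \to \dots \to X_N$ with $X_{k+1} = \co(U_k \to X_k)$, where
\[
U_k \;:=\; \coprod_{(h, f)} h[-(N-k)] \longrightarrow X_k
\]
is the universal map, indexed over pairs $(h, f)$ with $h \in \obj H$ and $f \in \du(h[-(N-k)], X_k)$; such coproducts exist in $\du$ as the shift of a coproduct formed in $D$. The long exact sequence of $\eu(H, -)$ applied to the triangle — using compactness of $H$ to pass coproducts through $\eu(H, -)$, and negativity of $H$ to kill $\eu(H, U_k[j])$ for $j > 0$ — yields $X_{k+1} \in \eu^{w' \le N-k-1}$. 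After $N$ stages $X_N \in \eu^{w' \le 0}$, and an iterated octahedron produces a triangle $Y \to X \to X_N \to Y[1]$ in which $Y$ is an iterated extension of the $U_k$'s; since each $U_k \in H'[-(N-k)] \subset \eu^{w' \ge 1}$ and $\eu^{w' \ge 1}$ is extension-stable, $Y \in \eu^{w' \ge 1}$, so $A := X_N$ and $B := Y[1] \in \eu^{w' \ge 0}$ yield the desired decomposition $B[-1] \to X \to A \to B$.

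The identification $\hw' = H'$ is obtained by applying the same construction to $Y \in \hw'$ (so $N = 0$, one stage): orthogonality forces the triangle $U \to Y \to C$ (with $U \in H'$, $C \in \eu^{w' \le -1}$) to split, and Karoubi-closedness places $C[-1]$ in $\eu^{w' \ge 0} \cap \eu^{w' \le -1} = \{0\}$, so $Y \cong U \in H'$. For the final assertion on compact $X$ satisfying both bounds of \eqref{bouab}, the lower bound $\{X[i] : i < j_X\} \perp \obj H$ combined with the compactness of $X$ allows a dual Postnikov analysis to show that the weight decomposition of $X$ at level $j_X - 1$ has trivial $\eu^{w' \le j_X-1}$-part, placing $X \in \eu^{w' \ge j_X}$ and hence in $\eu^{[j_X, q_X]}$. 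The main obstacle I anticipate is the bookkeeping of the iterated octahedron — ensuring that $Y$ is correctly assembled and lies in $\eu^{w' \ge 1}$ — and verifying that the coproducts $U_k$ involving negative shifts can be formed cleanly in $\du$, despite $D$ being closed only under $[+1]$.
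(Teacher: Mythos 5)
Your proposal only addresses part~(\ref{iort}), whereas the paper disposes of it by citing Theorem~4.3.2(III, version~(ii)) and part~V2 of Theorem~4.3.2 in [Bon10a]; you are essentially reconstructing that cited argument from scratch. Your construction of $w'$ itself (definition of $\eu^{w'\le 0}$ by orthogonality, of $\eu^{w'\ge 0}$ as a closure, the orthogonality check, and the finite Postnikov tower producing weight decompositions) is correct and is, I believe, in the spirit of what [Bon10a] does. Two small gaps there: you should also check $H'\subset \hw'$ (not only $\hw'\subset H'$), which amounts to noting that each coproduct $\coprod h_\alpha$ lies in $\eu^{w'\le 0}$ by compactness of the $h_\alpha$ and negativity of $H$; and it is worth saying explicitly that the existence of coproducts in $D$ together with $H\subset D$ is what guarantees that the coproducts $\coprod h$ can be formed in $\du$, after which the needed negative shifts are harmless.

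The genuine gap is in your final paragraph. You claim that the ``weight decomposition of $X$ at level $j-1$ has trivial $\eu^{w'\le j-1}$-part.'' For the decomposition your tower actually produces this is false: already for $X=h\in H$ and $j=q=0$, the tower for $X[-1]$ replaces it by the cone of the universal map $\coprod_{(h',f)}h'[-1]\to h[-1]$, which is generally non-zero. What is true (and what one must prove) is that the \emph{morphism} $X\to A$ vanishes, so that $X$ is a retract of $B[-1]\in\eu^{w'\ge j}$ and Karoubi-closedness of $\eu^{w'\ge j}$ finishes the argument. Establishing $\cu(X,A)=0$ is exactly where the lower bound and compactness of $X$ must be used, and your sketch gives no mechanism for it; in particular, it is \emph{not} a formal consequence of the tower you built going downward. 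A correct route is: (a) observe that, in your construction, $B[-1]$ is an iterated extension of the $W_k=\coprod h[k-q]$, so $B[-1]\in\eu^{[j,q]}$; (b) run the tower further on $A\in\eu^{w'\le j-1}$ (cones $\coprod h[m]$ with $m\le j-1$), note that $\du(X,\coprod h[m])=\bigoplus\du(X,h[m])=0$ whenever $m\ge 1-j$ by the lower bound and compactness of $X$, and deduce $\du(X,A_k)\cong\du(X,A)$ for all $k$; (c) since the $A_k$ drift to $-\infty$ in weight and $H$ weakly generates $\du$ and consists of compact objects, the homotopy colimit of the $A_k$ vanishes, and compactness of $X$ then forces $\du(X,A)=\operatorname{colim}\du(X,A_k)=0$. (Equivalently one can first use the fact that a compact object of a category weakly generated by the compact set $H$ lies in the thick subcategory generated by $H$, hence is bounded, and then argue at the maximal $j'$ with $X\in\eu^{w'\ge j'}$.) None of this is visible in the phrase ``dual Postnikov analysis,'' and the specific claim you made about the $\eu^{w'\le j-1}$-part being trivial would, even if salvaged, require this extra input.
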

\begin{proof}
\begin{enumerate}

\item  This is Proposition 1.3.3(3) of \cite{bws}.

\item Immediate from the distinguished triangle $A\to B\to X[1]$ and the previous assertion.

\item A weight decomposition of $X[q]$ yields a distinguished triangle
$X\to A'\stackrel{f'}{\to} B'\to X[1]$ for $A'\in \cu^{w\le q}$,
$B'\in \cu^{w\ge q}$. Assertion \ref{iext} implies that
$A'\in \cu^{w=q}$. Hence we can take $X^q=A'[q]$, $f=f'$.

%for X[?]??

%\item Immediate from Proposition 5.2.2 of ibid. and the description of $\lan H \ra ^{w\le 0}$ and $\lan H \ra ^{w\ge 0}$  in the proof of  Theorem 4.3.2(II1) of ibid.

\item %This is Corollary 1.5.7 of ibid.
Easy from Proposition 1.5.6(2) of ibid. %(note that one can take).

\item This is Proposition 5.2.2 of ibid.

\item This is Proposition 8.1.1 of ibid.

\item By  Theorem 4.3.2(II1) of %\cite{bws},
    ibid., there exists a unique weight structure on $\lan H\ra$
such that
    $D\subset \lan H\ra^{w=0}$. Next, Proposition 5.2.2 of ibid. yields
that $w$ can be extended to the whole $T$; along with Theorem 4.3.2(II2) of ibid. it also allows  calculating $T^{w=0}$
 in this case.

\item The existence of $w'$ is immediate from Theorem 4.3.2(III), version (ii), of ibid.
The second part of the assertion is given by part V2 of loc.cit. (cf. Definition 4.2.1 of ibid.).

\end{enumerate}

\end{proof}

\subsection{The 'main weight structure lemma'}\label{slws}

The main part of the proof of the central theorem is a certain weight
structure statement (not contained in \cite{bws}). We formulate and prove it here, since it could be
used independently from motives (so it could be useful even if in the
future the resolution of singularities will be fully established over
fields of arbitrary characteristic).

\begin{pr} \label{pmwsl}

Let $\du,D,H$ be as in Proposition \ref{pbw}(\ref{iort}).
Let $\cu\subset\du$ be an idempotent complete triangulated subcategory such that all objects of
$\cu$ are compact in $\du$, $H\subset \cu$, and $\cu$ is bounded with respect to $H$. 

Then the following statements are valid.

1. $\cu$ is contained in the Karoubi-closure $I$ of $\lan H\ra$ in $\du$.

2. There exists a bounded weight structure $w$ for $\cu$ such that $\hw$ is
the Karoubi-closure of %the additive hull of 
$H$ in $\cu$.

3. For $X\in \obj \cu$, we have: $X\in \cu^{[j,q]}$ whenever one can take $j$ for $j_X$ and $q$ for $q_X$ in (\ref{bouab}).

\end{pr}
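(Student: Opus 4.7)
The plan is to reduce to Proposition \ref{pbw}, parts (\ref{iort}) and (\ref{igen}). First one observes that $\cu\subset\eu$: any $X\in\obj\cu$ satisfies (\ref{bouab}), in particular the upper half that defines $\eu$, so $X\in\obj\eu$; and since $X$ is compact in $\du$, the final assertion of Proposition \ref{pbw}(\ref{iort}) places $X$ in $\eu^{[j_X,q_X]}$. The central step is Part 1, that $\cu\subset I$, which I would prove by induction on $q-j$ showing that every compact $X\in\eu^{[j,q]}$ lies in $I$. In the base case $q=j$, $X$ is a retract of some $Y\in H'[-j]$, which is itself a retract of a coproduct $\bigoplus_\alpha h_\alpha[-j]$ with $h_\alpha\in H$; compactness of $X$ forces the retract inclusion $X\to\bigoplus_\alpha h_\alpha[-j]$ to factor through a finite subcoproduct, so $X$ is a retract of an object of $\lan H\ra$ and hence in $I$.

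For the inductive step, take the weight decomposition $A\to X\to B\to A[1]$ in $\eu$ with $A\in\eu^{[j,q-1]}$, $B\in\eu^{w'=q}$, write $B=B'[-q]$ with $B'$ a retract of $\bigoplus_\alpha h_\alpha$, and use compactness of $X$ to factor $X\to B$ through a finite piece $B_1=\bigl(\bigoplus_{\alpha\in F}h_\alpha\bigr)[-q]\in\lan H\ra$. The completed triangle $A_1\to X\to B_1\to A_1[1]$ has $A_1$ compact as the cone of a map between compacts. The principal obstacle is to arrange the finite approximation so that this triangle is itself a weight decomposition, i.e., $A_1\in\eu^{w'\le q-1}$: a naive octahedron comparing $X\to B$ with the factored $X\to B_1$ leaves a weight-$q$ error term, and overcoming this requires choosing the approximation compatibly with $w'$ or iterating the procedure (at each stage enlarging the finite subcoproduct $F$ until the top-weight contribution of the current remainder is exhausted, a process that terminates because the compactness of $X$ bounds the relevant Hom sets finitely). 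Once this is done, $A_1\in\eu^{[j,q-1]}$, so by induction $A_1\in I$; combined with $B_1\in\lan H\ra\subset I$ and the fact that $I$ is a thick subcategory of $\du$, this gives $X\in I$.

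For Parts 2 and 3: $H$ is negative in $\cu$ because $H\subset H'=\eu^{w'=0}$ and orthogonality of $w'$ gives $H'\perp H'[i]$ for $i>0$. Proposition \ref{pbw}(\ref{igen}) applied to the idempotent complete $\cu$ with the negative additive $H$ yields a unique bounded weight structure $w$ on $T$ with heart the Karoubi-closure of $H$ in $\cu$. By Part 1 and idempotent completeness of $\cu$ we have $T=\cu$, giving Part 2. For Part 3, the weight structure $w$ on $\cu$ must coincide with the restriction of $w'$ to $\cu$—both have $H$ in their heart, and uniqueness from Proposition \ref{pbw}(\ref{igen}) forces the identification—so $\cu^{[j,q]}=\eu^{[j,q]}\cap\cu$, and the assertion follows immediately from the final part of Proposition \ref{pbw}(\ref{iort}).
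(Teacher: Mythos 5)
Your setup ($\cu\subset\eu$), your base case $q=j$, and your derivation of Parts 2 and 3 from Part 1 via Proposition \ref{pbw}(\ref{igen}) are all correct and match the paper. The gap is in your inductive step for Part 1, exactly where you flag it, and your proposed repair does not close it. You decompose from the top: take $A\to X\to B$ with $A\in\eu^{[j,q-1]}$, $B\in\eu^{w'=q}$, and replace $B$ by a finite subcoproduct $B_1\subset\coprod_\alpha h_\alpha[-q]$. Writing $K=\co(B_1\to B)[-1]$, the octahedron for $X\to B_1\to B$ gives a triangle $K[-1]\to A_1\to A$; since $B_1,B\in\eu^{w'=q}$ one only has $K\in\eu^{[q,q+1]}$, hence $K[-1]\in\eu^{[q+1,q+2]}$, and extension-stability places $A_1$ merely in $\eu^{w'\le q+2}$, not in $\eu^{w'\le q-1}$. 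Your suggested iteration (``enlarging $F$ until the top-weight contribution is exhausted'') has no termination mechanism: compactness of $X$ makes $\du(X,-)$ commute with coproducts but does not bound any Hom-group finitely, so the parenthetical justification is simply false, and I see no way to make that route work.

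The paper instead decomposes from the bottom, which is the choice that makes the finite approximation harmless. Proposition \ref{pbw}(\ref{itriv}) gives $X^s\in H'$ and $f\colon X\to X^s[-s]$ with $\co f\in\eu^{w'\ge s}$. Enlarging $X^s$ to the ambient coproduct $\coprod_{i\in I}H_i$ changes $\co f$ by a direct summand lying in $\eu^{w'=s}$ --- that is, on the high-weight side, which for a bottom-up decomposition is the side one is allowed to pollute, since one only needs $\co f\in\eu^{w'\ge s}$. Compactness then factors $f$ through a finite $\bigoplus_{i\in J}H_i[-s]$; the cone $\co f'$ of this finite piece is a retract of $\co f$, hence still in $\eu^{w'\ge s}$, and extension-stability applied to the triangle $\bigoplus_{i\in J}H_i[-s]\to\co f'\to X[1]$ gives $\co f'\in\eu^{w'\le t-1}$ (the paper writes $\le t$, but the argument yields $\le t-1$, which is what the induction needs). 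So the weight range of $\co f'$ strictly drops and the induction closes. The moral is an asymmetry you should internalize: passing from an infinite coproduct to a finite subcoproduct always introduces an error on the high-weight side; the bottom-up decomposition absorbs it, the top-down one cannot. If you rerun your induction with the bottom-up decomposition your argument becomes the paper's.
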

\begin{proof}

We adopt the notation of Proposition \ref{pbw}(\ref{iort}).

We have $\cu\subset \eu$ (by the definition of the latter). Besides (as proved in loc.cit) the analogue of
assertion 3 with $w'$ instead of  $w$ and with $\eu^{[j,q]}$
instead of $\cu^{[j,q]}$
is valid.

Now we prove assertion 1. We denote $\obj I$ by $G$.

We should prove that
\begin{equation}\label{cucag}X\in \obj\cu \cap \eu^{[q,r]}\implies X\in G\end{equation} for any $q\le r\in \z$.

%This is obvious for $k-q<0$. Now

First let $q=r$. Then $X[q]$ is a retract of $\coprod_{i\in I} H_i$
for some set $I$ and $H_i\in \obj H$. So, $\id_{X[q]}$ factorizes
through $\coprod_{i\in I} H_i$. Since $X[q]$ is compact,
$\du(X[q],\coprod H_i)=\bigoplus \du(X[q],H_i)$; so
$\id_{X[q]}$ also can be factorized through $\coprod_{i\in J} H_i$
for some finite $J\subset I$. Hence $X[q]$ is a retract of
$\coprod_{i\in J} H_i$; so $X\in G$.

Now we prove (\ref{cucag}) in the general case by induction on $r-q$.
%We can assume that $H$ is additive.

Suppose that it  is fulfilled for all $q,r$ such that $r-q\le m$ for
some $m\ge 0$. We prove
(\ref{cucag})
 for  some fixed $X\in \obj\cu \cap \eu^{[s,t]}$, where  $t-s=m+1$. By  Proposition \ref{pbw}(\ref{itriv}), there exist $X^s\in \obj H'$
 and $f\in \du(X,X^s[-s])$ such that $\co f\in \eu^{w'\ge s}$. By the
 definition of $H'$, $X^s$ is a retract of some $\coprod_{i\in I}H_i$,
 $H_i\in \obj H$. Since $\co f\in \eu^{w'\ge s}$, a cone of the
 induced morphism $X\to \coprod_{i\in I}H_i[-s]$ also belongs to
 $\eu^{w'\ge s}$ (since it is the direct sum of $\co f$ with the
 'complement' of $X^s[-s]$ to $\coprod_{i\in I}H_i[-s]$). So, we
 assume that $X^s=\coprod_{i\in I}H_i$. Now, since
 $\du(X,\coprod H_i[-s])=\bigoplus \du(X,H_i[-s])$, $f$ can be
 factorized through
 $\coprod_{i\in J} H_i[-s]$ (for some finite $J$). Then
 $\co f=\co(f':X\to \bigoplus_{i\in J} H_i[-s])\bigoplus\coprod_{i\in I\setminus J}X_i[-s]$, where $f'$ is the morphism 'induced' by $f$.
 So, $\co f'\in \eu^{w'\ge s}$; it also belongs to
 $\eu^{w'\le t}$ by  Proposition \ref{pbw}(\ref{iext}).
 Hence $\co f'\in G$. Since $\bigoplus_{i\in J} H_i[-s]\in G$,
 we obtain that $X\in G$.

  Now, Proposition \ref{pbw}(\ref{igen}) %??!!--\ref{iwgen})
 implies that  $w'$
 can be restricted to $\cu$ and the weight structure $w$ obtained is
 the one required for assertion 2. 
 Besides, the reasoning above also proves assertion 3 (by Proposition \ref{pbw}(\ref{iext})).
 
 %Then  (\ref{cucag})  immediately implies??
  %Assertion 3 follows immediately.
%Lastly, assertion 3 is immediate from the corresponding fact for $w'$. 

\end{proof}

\section{Motivic resolution of singularities }

%\subsection{$\z_{(l)}$-motivic resolution of singularities (using a result of Gabber)

In \S\ref{szll} we prove 'almost a $\zll$-version' of our main
result. Then Proposition \ref{pmwsl} allows us to deduce our central theorem (in \S\ref{szop}).

\subsection{$\zll$-version of the central theorem}\label{szll}

We fix some $l(\in \p\setminus\{p\})$.

We prove a statement that is essentially the $\zll$-version of our main
result. We do not formulate it this way since our goal is just to
prepare for the proof of Theorem \ref{tres}. Yet the notation
$\dmgel{}^{[0,m]}$ certainly comes from weight structures.

\begin{pr}\label{pres}

1. $\dmgel$ is the idempotent completion of
$\lan \mgl(P),\ P\in \spv \ra$.

2. Let $U\in \sv$, $\dim U=m$; let $P\in \spv$. Then \break %@@
$\dmel(\mgl(U),\mgl(P)[i])=\ns$ for $i>0$;
$\dmel(\mgl(P),\mgl(U)[i])=\ns$ for $i>m$.

\end{pr}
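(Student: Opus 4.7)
The plan is to prove both parts by induction on dimension, using Corollary \ref{cgab} as the main tool together with the decomposition $\mgl(\p^s)=\bigoplus_{j=0}^{s}\zll(j)[2j]$, which shows that any Tate-twisted effective motive $\mgl(Y)(s)[2s]$ is a direct summand of $\mgl(Y\times\p^s)$. The first half of Part 2 is handled separately via Poincar\'e duality.

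For Part 1, I would prove the strengthened claim: for every $V\in\sv$ with $\dim V\le d$ and every $s\ge 0$, the motive $\mgl(V)(s)[2s]$ lies in the Karoubi closure $I$ of $\lan\mgl(P):P\in\spv\ra$ in $\dmgel$. The base case $d=0$ is immediate because $V\times\p^s\in\spv$. For the inductive step, apply Corollary \ref{cgab}(2) to $V$ and tensor the resulting chain of distinguished triangles by $\zll(s)[2s]$: the cones become $\mgl(V_j)(j+s)[2(j+s)]$ with $\dim V_j=d-j<d$, which lie in $I$ by induction, so it suffices to show $\mgl(V')(s)[2s]\in I$. By Corollary \ref{cgab}(1), the latter is a retract of $\mgl(P_V')(s)[2s]$, and a further Gysin reduction applied to the open immersion $P_V'\subset P_V$ brings us to $\mgl(P_V)(s)[2s]$, a summand of $\mgl(P_V\times\p^s)\in\spv$. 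Setting $s=0$ and $V=U$ yields Part 1.

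For the first half of Part 2, I would combine the full embeddings $\dmgel\hookrightarrow\dmel$ and $\dmgel\hookrightarrow\dmgml$ with Poincar\'e duality $\mgl(P)^{\vee}\cong\mgl(P)(-d)[-2d]$ in $\dmgml$ (for $P\in\spv$ of dimension $d$), obtaining
\[\dmel(\mgl(U),\mgl(P)[i])\cong\dmgml(\mgl(U\times P),\zll(d)[i+2d])=H^{i+2d,d}(U\times P,\zll).\]
For $i>0$ this vanishes by the standard identification with Bloch's higher Chow groups $CH^d(U\times P,-i)$, which are zero in negative simplicial degree (equivalently, motivic cohomology vanishes for $p>2q$).

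For the second half of Part 2, I would run an induction parallel to Part 1 with the strengthened hypothesis: for every $V\in\sv$ with $\dim V\le d$ and every $s\ge 0$, $\dmel(\mgl(P),\mgl(V)(s)[2s+k])=0$ whenever $k>d$. The base $d=0$ reduces to the first half via the summand $\mgl(V)(s)[2s]\subset\mgl(V\times\p^s)\in\spv$. For the inductive step, the tensored Gysin chain from Corollary \ref{cgab}(2) yields long exact sequences in which the cone contributions $\dmel(\mgl(P),\mgl(V_j)(j+s)[2(j+s)+k'])$ for $k'\in\{k,k-1\}$ vanish by induction (since $k-1\ge d>d-j=\dim V_j$ for $j\ge 1$), reducing the computation to $\dmel(\mgl(P),\mgl(P_V)(s)[2s+k])$, itself a summand of $\dmel(\mgl(P),\mgl(P_V\times\p^s)[k])=0$ by the first half. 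The main bookkeeping obstacle is tracking the interaction between the Tate twist $s$ and the codimension-$j$ twists appearing in the Gysin cones, but no deeper issue arises once one commits to the strengthened inductive hypothesis.
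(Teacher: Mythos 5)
Your proof is correct and rests on the same ingredients as the paper's: Corollary \ref{cgab}, the Gysin triangle, the observation that $(1)[2]$ twists stay within the relevant class because $\mgl(P)(1)[2]$ is a summand of $\mgl(P\times\p^1)$, and the higher Chow (equivalently Poincar\'e duality) vanishing $\dmel(\mgl(U),\mgl(P)[i])=\ns$ for $i>0$. The organizational difference is worth noting: the paper first records (via Theorem~5.23 of \cite{degdoc}) that $H=\{\mgl(P)\}_{P\in\spv}$ is \emph{negative}, introduces the extension-stable Karoubi-closed class $\dmgel^{[0,r]}$ generated by $\mgl(P)[-s]$, $0\le s\le r$, and observes that negativity plus extension-stability of $\perp$-classes reduces \emph{both} assertions to the single statement $\mgl(U)\in\dmgel^{[0,m]}$ -- which is then proved by one induction on $m$. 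You instead prove the generation statement and the second Hom-vanishing by two parallel (but structurally identical) inductions, and the first Hom-vanishing by a separate direct duality computation. The content is the same, but the paper's packaging through $\dmgel^{[0,r]}$ lets one induction carry all three conclusions and avoids the $k$ vs.\ $k-1$ bookkeeping you had to perform in the second induction; it also already has the form needed to feed Proposition~\ref{pmwsl} in the next section. One small remark: your strengthened hypothesis for Part~1 (membership of $\mgl(V)(s)[2s]$ in the Karoubi closure $I$) records no shift bound, which is precisely why you then need a fresh induction for the second half of Part~2; strengthening it to ``$\mgl(V)(s)[2s]$ lies in the extension-stable Karoubi-closed class generated by $\mgl(P)[-j]$, $0\le j\le\dim V$'' would let you read off the second vanishing for free, recovering the paper's argument.
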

\begin{proof}

First we note that by Theorem 5.23 of \cite{degdoc} the subcategory
$H_{\dmge}$ of $\dmge$ whose objects are $\{\mgl(P),\ P\in \spv\}$
is negative (here we use the isomorphism of $\dmge(\mg(X,\z(i)[j]))$ with the corresponding
higher Chow groups). Hence $\{\mgl(P),\ P\in \spv\}$ is negative in
$\dmgel$ also; we denote this category by $H$.

We define %certain classes
$\dmgel{}^{[0,r]}\subset \obj \dmgel$ for $r\ge 0$
as the smallest extension-stable Karoubi-closed subclass of $\obj \dmgel$
 that contains $\mgl(P)[-s]$ for all $P\in \spv$, $0\le s\le r$.

Since $\dmgel$ is the idempotent completion of
$\lan \mgl(U),\ U\in \sv\ra$ (in $\dmel$) by definition,
in order to prove assertion 1 it suffices to verify:  in $\dmel$ the
Karoubi-closure of $\lan \mgl(P),\ P\in \spv \ra$ contains all $\mgl(U)$
for $U\in \sv$.
Hence the negativity of $H$ easily implies: in order to prove both of our
assertions it suffices to verify that $\mgl(U)\in \dmgel{}^{[0,m]}$ for
any $U$ as in assertion 2.

The latter statement is obvious for $m=0$. We prove it in general by induction
on $m$.

First we note that $\dmgel{}^{[0,m]}(1)[2]\subset \dmgel{}^{[0,m]}$ for
any $m$, since $\mgl(P)(1)[2]$ is a retract of $\mgl(P\times \p^1)$
(for $P\in \sv$). Hence $\mgl(Z)(c)[2c]\in \dmgel{}^{[0,n-1]}$ for any $Z$ of dimension
$<n$ and any $c\ge 0$.

Suppose now that our assertion is true for all $m<n$  for some $n>0$.
We verify it for some $U$ of dimension $n$.

We apply Corollary \ref{cgab}(2). In the notation of loc.cit. (for $m=n$), we obtain  for any $i>0$: $X_{i-1}\in \dmgel{}^{[0,n]}$ whenever  $X_{i}\in \dmgel{}^{[0,n]}$, and $Y_{i-1}\in \dmgel{}^{[0,n]}$ whenever  $Y_{i}\in \dmgel{}^{[0,n]}$. Since $Y_0\in \dmgel{}^{[0,n]}$, the same is true for $Y_n$, hence also for $X_n$ and for $X_0=\mgl(U)$.

\end{proof}

\subsection{The main result: 'motivic $\zop$-resolution of singularities'}\label{szop}

\begin{theo}\label{tres}

1. $\dmgep$ is the idempotent completion of $\lan \mgp(P),\ P\in \spv \ra$.

2. There exists a bounded weight structure $w_{\chow}$ for $\dmgep$
such that $\hw_{\chow}=\chowep$.

3. For $U\in \sv$, $\dim U=m$, we have: $\mgp(U)\in \dmgep^{[0,m]}$.

4. For any open dense embedding $U\to V$, for $U,V\in \sv$, we have: $\co(\mg(U)\to \mg(V))\in \dmgep^{w_{\chow}\ge 0}$.

\end{theo}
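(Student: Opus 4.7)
The plan is to derive parts 1--3 of the theorem from the abstract criterion of Proposition \ref{pmwsl}, applied with $\du=\dmep$, $\cu=\dmgep$, and $H=\{\mgp(P):P\in\spv\}$, and then to handle part 4 separately via the Gysin triangle (\ref{gys}) together with part 3.

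The single substantive input comes from transferring the $\zll$-Hom vanishing of Proposition \ref{pres}(2) to $\zop$-coefficients. For $U\in\sv$ of dimension $m$ and $P\in\spv$, the groups $\dmep(\mgp(U),\mgp(P)[i])$ (for $i>0$) and $\dmep(\mgp(P),\mgp(U)[i])$ (for $i>m$) are $\zop$-modules whose tensor products with $\zll$ coincide with the corresponding $\dmel$-Hom groups by Proposition \ref{pbmot}(1), and these vanish by Proposition \ref{pres}(2). Since a $\zop$-module that vanishes after $\otimes_{\zop}\zll$ for every prime $l\ne p$ is zero, the original groups vanish integrally. This simultaneously yields the negativity of $H$ in $\dmep$ and, since condition (\ref{bouab}) is stable under shifts, finite extensions, and retracts, the boundedness of $\dmgep$ with respect to $H$ (with bounds $j_{\mgp(U)}=0$, $q_{\mgp(U)}=m$ on the generators). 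The same conservativity argument, combined with Proposition \ref{pres}(1), shows that $H$ weakly generates $\dmep$.

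The remaining hypotheses of Proposition \ref{pmwsl} are formal: compactness of objects of $\dmgep$ in $\dmep$ (Proposition \ref{pbmot}(6)), idempotent completeness of $\dmgep$ (by construction), and the existence of an extension-stable $D\supset H$ with $D[1]\subset D$ carrying the required coproducts (take $D=\dmep^{t\le 0}$ using Proposition \ref{pbmot}(8)). The conclusion of Proposition \ref{pmwsl} then gives at once a bounded weight structure $w_{\chow}$ on $\dmgep$ whose heart is the Karoubi-closure of $H$ in $\dmgep$, namely $\chowep$ (part 2); an inclusion $\dmgep\subset I$ with $I$ the Karoubi-closure of $\lan H\ra$ in $\dmep$, which combined with $\lan H\ra\subset\dmgep$ and the Karoubi-closedness of $\dmgep$ in $\dmep$ yields part 1; and the weight bound $\mgp(U)\in\dmgep^{[0,m]}$, i.e.\ part 3.

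For part 4, stratify $V\setminus U$ as a disjoint union of smooth locally closed subvarieties $Z_i\subset V$ of codimensions $c_i\ge 1$ (possible since $k$ is perfect), and iterate the Gysin triangle (\ref{gys}) to exhibit $\co(\mgp(U)\to\mgp(V))$ as an iterated extension of the motives $\mgp(Z_i)(c_i)[2c_i]$. Since $\mgp(\p^1)\cong\z\oplus\z(1)[2]$ in $\chowep$, the Tate-twist-shift $(1)[2]$ preserves $\chowep=\dmgep^{w_{\chow}=0}$; by Proposition \ref{pbw}(\ref{ibougen}) it then preserves $\dmgep^{w_{\chow}\ge 0}$. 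Combined with part 3 applied to each $Z_i$, this places every $\mgp(Z_i)(c_i)[2c_i]$ in $\dmgep^{w_{\chow}\ge 0}$, and extension-stability (Proposition \ref{pbw}(\ref{iext})) completes the argument. The one nontrivial step in the plan is the $\zll$-to-$\zop$ transfer of paragraph two, the only place where Gabber's theorem itself is invoked; everything else is formal once Proposition \ref{pmwsl} is available.
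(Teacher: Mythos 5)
Your proposal is correct and follows essentially the same route as the paper: apply Proposition~\ref{pmwsl} with $\du=\dmep$, $\cu=\dmgep$, $H=\{\mgp(P):P\in\spv\}$, $D=\dmep^{t\le 0}$, transferring the needed vanishing and boundedness from the $\zll$-case (Proposition~\ref{pres}) via conservativity, and deducing part~4 from part~3 by stratifying $V\setminus U$ and iterating the Gysin triangle. The only cosmetic difference is that the paper derives negativity of $H$ directly from Theorem~5.23 of \cite{degdoc} rather than by the $\zll\to\zop$ transfer, and it spells out the weak-generation step via hypercohomology (Proposition~\ref{pbmot}(4)) where you only allude to it.
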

\begin{proof}

We set $H=\{\mgp(P),\ P\in \spv \}$, $\cu=\dmgep$, and $\du=\dmep$, $D=\dmep^{t\le 0}$,
and verify that the assumptions of Proposition \ref{pmwsl} are fulfilled.

By Proposition \ref{pbmot}(6), all objects of $\dmgep$ are compact
in $\dmep$. We have $H\subset D$ by part 8 of loc.cit. %Proposition \ref{pbmot}(8). 
Besides, $D$ is extension-stable, contains $D[1]=\dmep^{t\le -1}$, and admits arbitrary coproducts.

 Using Theorem 5.23 of \cite{degdoc} we obtain (similarly to the proof of Proposition \ref{pres}) that $H$ is
 negative.

By Proposition \ref{pres},
 for any $l(\neq p)$ the image of $\dmgep$ in $\dmgel$ is bounded with respect
 to the image of $H$ in $\dmgel$ %?
  (one can easily deduce this
 fact from any of the parts of the proposition). Hence $\dmgep$ is
 bounded with respect to $H$.

 It remains to verify that for any $S\in \obj \dmel$, $S\neq 0$,
 there exist  $P\in\spv$ and $j\in \z$ such that
 $\dmep(\mgp(P),S[j])\neq \ns$.

 Recall that $\dmep$ is a full subcategory of $D^-(\ssc)$. % a certain derived category
 %of sheaves.
 So there exist some $U\in\sv$ and $m\in \z$ such that the $m$-th
  hypercohomology of $S$ at $U$ is non-zero. We choose some
  $l\neq p$ such that this hypercohomology group is not $l$-torsion.
  Then the $m$-th
  hypercohomology at $U$ of $S_l$ is non-zero also, where $S_l$ is the
  image of $S$ in $\dmel$.
  Now, by  Proposition \ref{pbmot}(4)
  this
  group is exactly $\dmel(\mgl(U),S_l[m])$. Then  Proposition
  \ref{pres}(1) easily implies: there exist  $P\in\spv$ and $j\in \z$
  such that $\dmel(\mgl(P),S_l[j])\neq \ns$. Hence
  $\dmep(\mgp(P),S[j])\neq \ns$ also.

  Now we can apply Proposition \ref{pmwsl}; it yields assertions
  1 and 2 immediately. Applying    Proposition \ref{pres}(2)
  for all $l\neq p$ simultaneously along with Proposition \ref{pmwsl}(3),
  we prove assertion 3.
  
  Assertion 4 can be easily deduced from  assertion 3 by induction. To this end we choose a sequence of $U_i\in \sv$ such that: $U_0=U\subset U_1\subset U_2\subset \dots U_m=V$ (for some $m\in \z$) and $U_{i+1}\setminus U_i$ is non-singular and has some codimension $c_i$ everywhere in $U_{i+1}$ for all $i$. Then applying (\ref{gys}) repeatedly we obtain the result; cf. the proof of Proposition \ref{pres}.

\end{proof}

\begin{rema}\label{rres}

1. Our 'globalization' argument (i.e. passing from $\zll$-coefficients
to $\zop$-ones) certainly can be applied in other situations; it only
requires some of 'formal' properties of motives (with $\zop$ and
$\zll$-coefficients) to be fulfilled.

Moreover, one could even  pass to integral coefficients if a similar $\z_{(p)}$-information is available also.

2. A category of relative Voevodsky's
motives could be an example of  a setup of this sort.  This means: one should consider (some) Voevodsky's motives over a base scheme $S$; note that in \cite{degcis} a rational coefficient version of such a category was thoroughly studied and called the category of Beilinson motives, whereas in \cite{hebpo} and \cite{brelmot} a certain Chow weight structure for this category was introduced. Unfortunately, currently we don't know much about $S$-motives with $\zll$-coefficients.

3. We will deduce several implications from our Theorem below.
Now we will only note that any $X\in \obj\dmgep$ has a 'filtration'
(that can be easily described in terms of weight decompositions of
$X[i]$, $i\in \z$) whose 'factors' are objects of $\chowep$
(this is a {\it weight Postnikov tower} of $X$; see Definition 1.5.8 of
\cite{bws}). In particular, it follows that for any $U\in \sv$,
$X=\mgp(U)$, there exist an $X^0\in \obj\chowep$ and an
$f\in \dmgep (X,X^0)$ such that $\co f\in  \dmgep^{w_{\chow} \ge 0}$.
Note here that $\dmgep (X,X^0)$ can be described in terms of $\smc$;
one can assume that $X^0=\mgp(P)$ for some $P\in \spv$.

Now, if $U$ admits a smooth compactification $P$, then $\mgp(P)$
is one of the possible choices of $X^0$ (see part 4 of the theorem). So, our results yield the existence of  a certain
'motivic' analogue of a smooth compactification of $U$; this justifies
the title of the paper.
Moreover, for motives with $\zll$-coefficients one could try to find
some $X^0$ using Gabber's resolution of singularities of results. Yet
with $\zop$-coefficients this result seems to be very far from being
obvious from 'geometry'; it is also not clear how to look for a
'geometric' candidate for $X^0$ in the absence of a $\zop$-analogue of
Proposition \ref{pgab}.
\end{rema}

\section{Applications}

In \S\ref{sdmgm} we prove that the Chow weight structure can be extended to $\dmgmp$. We also compute certain $K_0$-groups of $\dmgep$ and $\dmgmp$.

In \S\ref{swc} we recall (following \cite{bws}) that the existence of $w_{\chow}$ implies the existence of the weight complex functor ($\dmgmp\to K^b(\chowp))$; it is exact and conservative), and of Chow-weight spectral sequences for any cohomology of motives. 

In \S\ref{sbir} we study {\it birational motives} and birational homotopy invariant sheaves with transfers (as defined in \cite{kabir}). Our results immediately yield the existence of a weight structure for $\zop$-birational motives whose heart contains all 'birational motives of smooth varieties'. This extends some results of %\cite{kabir} 
ibid. to $\zop$-motives over $k$.

In \S\ref{sunr} we prove the existence of a certain {\it Chow $t$-structure} $\tcho$ for $\dmep$ whose heart is $\adfu(\chowep^{op},\ab)$. It turns out that a homotopy invariant sheaf with transfers $S$ belongs to the heart of $\tcho$ whenever it is birational. Moreover, $H^0_{\tcho}(S)$ is the largest birational subsheaf of $S$. Using this fact, we express unramified cohomology in terms of $\tcho$.

In \S\ref{smgc}  we prove that $\dmgmp$ is a perfect triangulated category: this follows easily  from the fact that this category is generated by $\chowp$ via a method of M. Levine and \cite{hubka}. It follows that for any smooth variety there exists a 'reasonable' motif with compact support for it (in $\dmgep$).

\subsection{The Chow weight structure for $\dmgmp$; $K_0$ for $\dmgep\subset \dmgmp$}\label{sdmgm}

Similarly to $\dmgm$ (as in \cite{1}) we define $\dmgmp$ as
$\dmgep[\zop(-1)]$, where $\mgp(\p^1)=\mgp(\pt)\bigoplus \zop(1)[2]$ (i.e. we invert $\zop(1)$ formally).

\begin{pr}\label{pwcgm}
1. $\dmgmp=\lan \chowp \ra$.

2. There exists a weight structure on $\dmgmp$ extending
$w_{\chow}$ for $\dmgep$, whose heart is $\chowp$.

3. We have $\dmgmp^{w_{\chow}\le 0}\otimes \dmgmp^{w_{\chow}\le 0}\subset \dmgmp^{w_{\chow}\le 0}$
and $\dmgmp^{w_{\chow}\ge 0}\otimes \dmgmp^{w_{\chow}\ge 0}\subset \dmgmp^{w_{\chow}\ge 0}$.

\end{pr}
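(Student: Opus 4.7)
For part 1, recall that $\dmgmp=\dmgep[\zop(-1)]$, so every object of $\dmgmp$ is (a retract of) one of the form $X(-m)$ for some $X\in\obj\dmgep$ and $m\ge 0$. Since $\mgp(P)(-m)[-2m]\in \chowp$ for all $P\in\spv$ and $m\in\z$, while Theorem \ref{tres}(1) expresses every object of $\dmgep$ from the $\mgp(P)$ via triangulated operations and retracts, we obtain $\dmgmp=\lan\chowp\ra$ at once.

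For part 2, I would apply Proposition \ref{pbw}(\ref{igen}) to $H=\chowp\subset\dmgmp$ (which is idempotent complete and coincides with the Karoubi-closure of $\lan\chowp\ra$ by part 1). The key hypothesis to verify is that $\chowp$ is negative in $\dmgmp$. For $M_1,M_2\in\chowp$, since the Tate twist is an equivalence on $\dmgmp$, we have $\dmgmp(M_1,M_2[j])\cong\dmgmp(M_1(N),M_2(N)[j])$ for any $N\in\z$; choosing $N$ sufficiently large places both $M_i(N)$ in $\chowep=\dmgep^{w_{\chow}=0}$. By the cancellation theorem with $\zop$-coefficients, this morphism group equals $\dmgep(M_1(N),M_2(N)[j])$, which vanishes for $j>0$ by orthogonality of $w_{\chow}$ on $\dmgep$. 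Proposition \ref{pbw}(\ref{igen}) then produces a bounded weight structure $w$ on $\dmgmp$ with heart $\chowp$. To see that $w$ extends $w_{\chow}$: for any $X\in\obj\dmgep$ take a $w_{\chow}$-weight decomposition $B[-1]\to X\to A\to B$ inside $\dmgep$; its outer terms satisfy $A\in\dmgep^{w_{\chow}\le 0}\subset\dmgmp^{w\le 0}$ and $B\in\dmgep^{w_{\chow}\ge 0}\subset\dmgmp^{w\ge 0}$, both containments following by extension-stability of the right-hand sides starting from $\chowep[k]\subset\dmgmp^{w\le 0}$ for $k\ge 0$ and its counterpart for $\ge 0$. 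Thus the same triangle is a $w$-weight decomposition in $\dmgmp$, and the two weight structures agree on $\dmgep$.

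For part 3, the tensor product on $\dmgmp$ is biexact. Fix $Y\in\dmgmp^{w_{\chow}\le 0}$; the class of $X$ with $X\otimes Y\in\dmgmp^{w_{\chow}\le 0}$ is extension-stable (by biexactness and Proposition \ref{pbw}(\ref{iext})), Karoubi-closed, and closed under $[1]$ (semi-invariance). By boundedness of $w_{\chow}$ together with Proposition \ref{pbw}(\ref{ibougen}), $\dmgmp^{w_{\chow}\le 0}$ is the smallest such class containing $\chowp$; so it suffices to show $M\otimes Y\in\dmgmp^{w_{\chow}\le 0}$ for $M\in\chowp$. Applying the symmetric reduction to $Y$ with $M$ fixed, we are further reduced to verifying $M\otimes N\in\chowp$ for $M,N\in\chowp$. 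But $\mgp(P_1)(n_1)[2n_1]\otimes\mgp(P_2)(n_2)[2n_2]\cong \mgp(P_1\times P_2)(n_1+n_2)[2(n_1+n_2)]\in\chowp$, since products of smooth projective varieties are smooth projective. The $w_{\chow}\ge 0$ statement is identical with $[-1]$ in place of $[1]$. The principal obstacle throughout is the negativity verification in part 2, which rests on the cancellation theorem for $\zop$-motives over $k$; once that is in hand, the rest is a mechanical invocation of the weight-structure machinery of \S\ref{sbws}.
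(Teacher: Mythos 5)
Your proof is correct and follows essentially the same route as the paper: part 1 via the generation of $\dmgep$ by $\chowep$ (equivalently Theorem \ref{tres}(1)) plus inversion of the Tate twist; part 2 via negativity of $\chowp$ in $\dmgmp$ (ultimately resting on the cancellation theorem of \cite{voevc}, which the paper cites to conclude $-\otimes\z(1)[2]$ is fully faithful) and Proposition \ref{pbw}(\ref{igen}); part 3 by reducing, through extension-stability and \ref{ibougen}, to the observation that $\chowp\otimes\chowp\subset\chowp$. Your write-up merely unpacks the two-line citation the paper uses, notably spelling out the negativity verification and the "extends $w_{\chow}$" claim explicitly.
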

\begin{proof}

Proposition \ref{pbw}(\ref{ibougen})
yields that $\dmgep=\lan \chowep \ra$. % (by Corollary 1.5.7 of \cite{bws}).
We deduce assertion 1 immediately.

Since $-\otimes \z(1)[2]$ is a full embedding of $\dmge$ into itself (see
\cite{voevc}), the same is true for $\dmgep$. Hence
$\chowp=\chowep[\zop(-1)[-2]]$ is negative in $\dmgmp$. Hence Proposition \ref{pbw}(\ref{ibougen}, \ref{igen})
along with assertion 1 implies assertions 2 and 3.

\end{proof}

\begin{rema}\label{rttw}
By assertion 3, for $X\in \obj \chowp\subset \obj \dmgmp$
the functor $-\otimes X$ is weight-exact i.e. it sends $\dmgmp^{w_{\chow}\le 0}$
and $\dmgmp^{w_{\chow}\ge 0}$ to themselves. In particular, this is true for
$X=\zop(1)[2]$. Moreover, since $-\otimes \zop(1)[2]$ is an invertible
functor, for any $i,j\in \z$ we have
$Y(1)[2] \in\dmgmp^{[i,j]}\iff  Y \in\dmgmp^{[i,j]}$.

\end{rema}

Now we calculate certain $K_0$-groups of $\dmgep\subset \dmgmp$.

\begin{pr}\label{pkz}

We define $K_0(\chowep)$ (resp.  $K_0(\chowp)$) as the groups whose
generators are $[X]$, $X\in \obj \chowep$ (resp. $X\in \obj \chowp$),
and the relations are: $[Z]=[X]+[Y]$ for
$X,Y,Z\in\obj \chowep$ (resp. $X,Y,Z\in \obj \chowp$) such that $Z\cong X\bigoplus Y$.
For $K_0(\dmgep)$ (resp. $K_0(\dmgmp)$)
we take similar generators and  set $[B]=[A]+[C]$ if
$A\to B\to C\to A[1]$ is a distinguished triangle.

Then the embeddings $\chowep\to \dmgep$ and $\chowp\to \dmgmp$ yield
isomorphisms $K_0(\chowep)\cong K_0(\dmgep)$ and
$K_0(\chowp)\cong K_0(\dmgmp)$.
\end{pr}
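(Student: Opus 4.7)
The plan is to reduce both statements to a single general fact about bounded weight structures: if $w$ is a bounded weight structure on a triangulated category $\cu$ with heart $\hw$, then the embedding $\hw\hookrightarrow\cu$ induces an isomorphism $K_0(\hw)\cong K_0(\cu)$ (with the groups defined as in the statement of the proposition). Theorem \ref{tres}(2) supplies such a $w$ on $\dmgep$ with heart $\chowep$, and Proposition \ref{pwcgm}(2) does the same for $\dmgmp$ with heart $\chowp$, so both isomorphisms will follow at once from this general fact.

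For surjectivity of $K_0(\hw)\to K_0(\cu)$, I would use the weight Postnikov tower attached to any $X\in\obj\cu$ (cf.\ Remark \ref{rres}(3); existence is guaranteed by boundedness together with Proposition \ref{pbw}(\ref{ibougen})). Induction on the length of the tower, combined with the triangle relation $[B]=[A]+[C]$ and the identity $[Y[1]]=-[Y]$ read off from the triangle $Y\to 0\to Y[1]\stackrel{+}{\to}$, expresses $[X]=\sum_{i}(-1)^{i}[X^{i}]$ with $X^{i}\in\obj\hw$; hence every generator of $K_0(\cu)$ lies in the image.

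Injectivity requires producing a left inverse. The tool for this is the (strong) weight complex functor $t:\cu\to K^b(\hw)$ attached to a bounded weight structure, which is established in \cite{bws}. Exactness of $t$ induces a well-defined map $K_0(t):K_0(\cu)\to K_0(K^b(\hw))$; the stupid-filtration weight structure on $K^b(\hw)$ of Remark \ref{rstws}, together with the surjectivity step applied to it, identifies $K_0(K^b(\hw))$ with $K_0(\hw)$. Since $t$ restricts to the identity on $\hw$, the composite $K_0(\hw)\to K_0(\cu)\stackrel{K_0(t)}{\longrightarrow}K_0(\hw)$ is the identity, which yields injectivity. The main obstacle is thus to invoke the weight complex formalism of \cite{bws} in the required generality; everything else is formal bookkeeping once the bounded Chow weight structures of Theorem \ref{tres}(2) and Proposition \ref{pwcgm}(2) are available, and no further geometric input is needed.
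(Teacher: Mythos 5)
Your proposal follows exactly the paper's route: the paper's proof is a one-liner citing Proposition 5.3.3(3) of \cite{bws}, which is precisely the general statement you isolate (for a bounded weight structure $w$ on an idempotent complete $\cu$, the inclusion $\hw\hookrightarrow\cu$ induces $K_0(\hw)\cong K_0(\cu)$), together with Proposition \ref{pwcgm} to supply the weight structures; what you add is a sketch of the proof of that cited result. Two small points worth keeping in mind: the paper explicitly notes that $\dmgmp$ is idempotent complete (since $\dmgep$ is), which is a hypothesis of the general statement; and for the injectivity step, the strong weight complex functor $t$ you invoke requires a differential graded enhancement (cf.\ Proposition \ref{pwc}), whereas the $K_0$-isomorphism in \cite{bws} is obtained via the ``weak'' weight complex functor, which exists for any bounded weight structure without enhancement and suffices for this purpose.
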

\begin{proof}
Immediate from Proposition \ref{pwcgm} and 
Proposition 5.3.3(3) of \cite{bws}.

Here we use the fact that $\dmgmp$ is idempotent complete since $\dmgep$ is.

\end{proof}

\begin{rema}
Certainly, we have similar isomorphisms for $\zll$-motives (as well
as for motives with coefficients in any commutative $\zop$-algebra).
Besides, all these isomorphisms are actually ring isomorphisms.
\end{rema}

\subsection{Weight complexes and weight spectral sequences for $\zop$-Voevodsky's motives}\label{swc}

We prove that the weight complex functor (whose 'first ancestor'
was defined by Gillet and Soul\'e)
can be defined for $\zop$-Voevodsky's motives.

\begin{pr}\label{pwc}
1. There exists an exact conservative weight complex functor
$t:\dmgmp\to K^b(\chowp)$ which restricts to an (exact conservative)
functor $\dmgep\to K^b(\chowep)$.

2. For $X\in \obj \dmgmp$, $i,j\in \z$, we have $X\in \dmgmp^{[i,j]}$ whenever  $t(X)\in K(\chowp)^{[i,j]}$ (see Remark \ref{rstws}). 
\end{pr}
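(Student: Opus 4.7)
The plan is to apply the general weight complex machinery of \cite{bws} to the bounded Chow weight structure $w_{\chow}$ on $\dmgmp$ (and its restriction to $\dmgep$) supplied by Proposition \ref{pwcgm}. All substantive work has already been carried out, both in establishing $w_{\chow}$ here and in the abstract development of \cite{bws}, so the present proof is essentially bookkeeping: one inputs the new weight structure into an existing machine.

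First I would invoke the construction of the weight complex functor from \cite{bws}: to any triangulated category $\cu$ equipped with a bounded weight structure $w$, one associates an exact functor $\cu \to K^b(\hw)$ by choosing, for each $X \in \obj \cu$, a weight Postnikov tower (Definition 1.5.8 of \cite{bws}) and extracting the associated sequence of heart objects together with the obvious connecting morphisms. Applied to $(\dmgmp, w_{\chow})$ with $\hw_{\chow} = \chowp$, this produces the desired $t : \dmgmp \to K^b(\chowp)$. Since any weight decomposition of an object of $\dmgep$ may be chosen inside $\dmgep$ (the weight structure on $\dmgmp$ extends that on $\dmgep$ by Proposition \ref{pwcgm}), the weight Postnikov tower of such an object has terms in $\chowep$, and $t$ consequently restricts to an exact functor $\dmgep \to K^b(\chowep)$.

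Part 2 is the direct analogue of the general ``weight range comparison'' statement of \cite{bws}: for a bounded weight structure, $X \in \cu^{[i,j]}$ whenever $t(X)$ lies in $K^b(\hw)^{[i,j]}$ (the weight structure on $K^b(\hw)$ being the stupid filtration; cf.\ Remark \ref{rstws}). Conservativity of $t$ on $\dmgmp$, and by the same argument on $\dmgep$, then follows formally from part 2: if $t(X) \simeq 0$ in $K^b(\chowp)$, then $t(X) \in K(\chowp)^{[i,j]}$ for every pair $(i,j)$ with $i > j$, so part 2 yields $X \in \dmgmp^{[i,j]} = \ns$, whence $X = 0$.

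The only real difficulty is to identify the precise statement in \cite{bws} that delivers part 2 in the required form; since \cite{bws} is the author's own systematic treatment and the weight complex functor is its central construction, I expect the reference to be essentially immediate and the proof here to reduce to a short citation.
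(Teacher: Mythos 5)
Your plan of reducing part 1 to the general weight complex machinery of \cite{bws} applied to the bounded weight structure $w_{\chow}$ from Proposition \ref{pwcgm} is the right starting point, but the key sentence in your proposal is false as stated: it is \emph{not} true that a bounded weight structure on an arbitrary triangulated category $\cu$ yields, by the naive procedure of extracting the heart-terms of a weight Postnikov tower, an \emph{exact} functor $\cu\to K^b(\hw)$. Weight Postnikov towers are not functorial on the nose; the construction you describe only produces a so-called ``weak'' weight complex functor (with target a weakened homotopy category), which is additive but is not known to be exact, and whose conservativity is likewise not automatic. Passing from the weak weight complex functor to an honest exact functor $\cu\to K^b(\hw)$ requires extra rigidity on the triangulated category — this is precisely what the paper supplies by invoking the existence of differential graded enhancements of $\dmgep\subset\dmgmp$ (Definition 6.1.2 and \S7.3 of \cite{bws}, applied via Proposition 5.3.3 of ibid.). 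Your proposal omits this ingredient entirely, so the construction you sketch does not deliver an exact functor, and the conservativity argument you then build on part 2 would not get off the ground.

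Once one adds the DG enhancement hypothesis, the rest of your argument lines up with the paper: part 2 is indeed a citation (Theorem 3.3.1(IV) of \cite{bws}), the restriction to $\dmgep$ works because weight decompositions of objects of $\dmgep$ can be taken inside $\dmgep$, and your derivation of conservativity from part 2 is a perfectly valid (if slightly roundabout) alternative to citing conservativity directly. So the approach is salvageable, but as written it hides the one genuinely non-formal input, which is exactly the point the paper is careful to flag.
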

\begin{proof}
1. By  Proposition 5.3.3 of \cite{bws}, this follows from the existence of bounded Chow weight structures for $\dmgep\subset \dmgmp$ along with the fact that these categories admit differential graded enhancements (see Definition 6.1.2 and \S7.3 of ibid.).

2. Immediate from  Theorem 3.3.1(IV) of ibid.
\end{proof}

\begin{rema}\label{rwc}

1. One can easily describe $t(\mgp(U))$ if $U\in \sv$ is the complement
of a normal crossings divisor to a smooth projective variety. To this end
one could apply the results of \S6.5 of \cite{mymot} along with
Poincare duality.

Now, similarly to Remark \ref{rres}(2), for a general
$U\in\sv$ one could try to calculate $t(\mgl(U))$ using Theorem 1.3
of \cite{illgab}. Yet $t(\mgp(U))$ seems to be rather mysterious from
the 'geometric' point of view.

2. The 'first ancestor' of  weight complex functors (the 'current' one and that for general triangulated categories with weight structures were introduced in \cite{bws}) was defined  in \cite{gs}. To a variety $X$ over a characteristic $0$ field they (essentially) assigned $t(\mg^c(X))$; see \S\S6.5-6.6 of \cite{mymot} and \S\ref{smgc} below.  Yet for $\cha k>0$ their methods only yield the existence of weight complexes  with values either in $K^b(\chowe\q)$ or in $K(\chowel)$ (i.e. they do not prove that $\zll$-weight complexes are always homotopy equivalent to bounded ones; see \S5 of \cite{sg}).

3. In \cite{mymot} in the case $\cha k=0$ also a certain differential graded 'description' of $\dmge$ was given (it is somewhat similar to the definition of Hanamura's motives; a comparison (anti)isomorphism from Voevodsky's $\dmgm$ to the category of Hanamura's motives was also constructed there). Unfortunately, this result relies heavily on certain consequences of  '$cdh$-descent', and it seems that no substitute for it is known in the case $\cha k>0$ (even for motives with rational coefficients).

\end{rema}

%?????!!!!!!!

Now we discuss (Chow)-weight spectral sequences for cohomology of $\zop$-motives. One can also easily dualize this to obtain similar results for homological functors (see Theorem 2.3.2 of \cite{bws}). We note that any weight structure yields certain weight spectral sequences for any cohomology theory; the main difference of the result below from Theorem 2.4.2 of ibid. is that $T(H,X)$ always converges  (since our Chow weight structure is bounded).

\begin{pr}
Let $\au$ be an abelian category, $X\in \obj\dmgmp$; we denote by $(X^i)$ the terms of $t(X)$ (so $X^i\in \obj \chowp$; here we can take any possible choice of $t(X)$ as an object of $C^b(\chowp)$).

 I Let $H:\dmgep\to \au$ be a cohomological functor, $X\in \obj \dmgep$, $H^i=H([-i])$ for any $i\in \z$.
Then there exists a spectral sequence $T=T(H,X)$ with $E_1^{pq}=
H^{q}(X^{-p})\implies H^{p+q}(X)$; the differentials for $E_1^{**}(T(H,X))$ come from $t(X)$.
  
$T(H,X)$ is $\dmgep$-functorial in $X$ starting from $E_2$.

II Similar statements hold for any cohomological functor $H:\dmgmp\to \au$ (and any $X\in \obj\dmgmp$).
\end{pr}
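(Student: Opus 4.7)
The plan is to apply the general construction of weight spectral sequences from \cite{bws} (specifically Theorem 2.4.2 there) to the Chow weight structure $w_{\chow}$ on $\dmgep$ (respectively on $\dmgmp$), whose existence was established in Theorem \ref{tres} (respectively Proposition \ref{pwcgm}). Recall that a weight structure on a triangulated category $\cu$ supplies every $X\in\obj\cu$ with a \emph{weight Postnikov tower}, i.e. a compatible system of weight decompositions of its shifts, and applying any cohomological functor $H:\cu\to\au$ to this tower produces a spectral sequence of exactly the shape claimed.

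First I would recall that for $w_{\chow}$ on $\dmgep$ (respectively on $\dmgmp$) the weight Postnikov tower of $X$ has its weight factors in the heart $\chowep$ (respectively $\chowp$); by the construction of the weight complex functor $t$ of Proposition \ref{pwc}, these factors may be identified with the terms $X^i$ of $t(X)$. Consequently the $E_1$-page of the resulting spectral sequence takes the claimed form $E_1^{pq}=H^q(X^{-p})$, and the $E_1$-differentials are those inherited from the Postnikov tower, which by the very definition of $t$ coincide with the differentials of $t(X)$. The $\dmgep$-functoriality of $T(H,X)$ from the $E_2$-page onwards is the substantive content of Theorem 2.4.2 of \cite{bws} and carries over unchanged.

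The only point that deserves a separate remark is convergence: the cited theorem gives only a conditional convergence statement in general, but in our setting $w_{\chow}$ is bounded on both $\dmgep$ and $\dmgmp$. Boundedness forces the Postnikov tower of any $X$ to have only finitely many non-zero weight factors, so $T(H,X)$ has only finitely many non-zero columns and therefore converges strongly to $H^{p+q}(X)$. Part II is deduced identically, replacing $(\dmgep,\chowep)$ by $(\dmgmp,\chowp)$ throughout and invoking Proposition \ref{pwcgm} in place of Theorem \ref{tres}. There is no genuine obstacle here; the proposition is a direct harvest of the weight-structure machinery of \cite{bws} once $w_{\chow}$ is in place.
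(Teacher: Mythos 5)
Your proposal is correct and follows exactly the paper's route: the paper proves this by citing Theorem 2.4.2 of \cite{bws}, and the surrounding text already notes that boundedness of $w_{\chow}$ is what upgrades the conditional convergence there to actual convergence here. You have simply spelled out the intermediate steps (weight Postnikov towers, identification of weight factors with terms of $t(X)$) that the paper leaves implicit.
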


\begin{proof}
%I  

Immediate from Theorem 2.4.2 of \cite{bws}. % and the definition of weight complexes (see Definition 2.2.1 of ibid.).

%II  Immediate from Theorem 2.3.2 of ibid. 
\end{proof}

\begin{rema}

1. The {\it Chow-weight} spectral sequence $T(H,X)$  induces a certain (Chow)-weight filtration on $H^*(X)$. This filtration is $\dmgep$-functorial (since $E_2(T)$ is). This filtration  can also be (easily) described in terms of weight decompositions (only); see \S2.1 of ibid.

2. We obtain certain (Chow)-weight spectral sequences and weight
filtrations for all realizations of motives. In particular, we have them
for \'etale cohomology of motives, and for
$\zop$-motivic cohomology.

Note here: it certainly suffices to have the Chow weight structure for $\dmgml$ in order to have Chow-weight spectral sequences for $H\otimes \zll$; yet without a $\zop$-weight structure it would not be clear at all that the whole collection of these spectral sequences (for all $l\neq p$) can be chosen to come from a single {\it weight Postnikov tower}  for $X$ (see Definition 1.5.8 of ibid.). In particular, it is not (really) important whether we use the $\zop$-Chow weight structure or the $\zll$-one in order to construct the weight spectral sequences for $\zl$-\'etale cohomology if we  fix $l$; yet $\zop$-weight structure yields certain 'relations' between these spectral sequences for various $l$, as well as with $\zop$-motivic cohomology. 

Recall also (see Remark  2.4.3 of of ibid.) that  
 the $\ql$-\'etale cohomology of motives the  weight filtration obtained coincides with the usual one (up to a shift of
indices). Besides,  note  that
'classically' the weight filtration (for \'etale cohomology) is well-defined only for
  rational (i.e. $\ql$-) coefficients. % (and is not defined for motivic cohomology at all).

Lastly, recall that for motivic cohomology we obtain quite new spectral sequences (yet a certain easy partial case can be obtained from Bloch's long exact
localization sequence for higher Chow groups of varieties), that do not have to degenerate at any fixed level (even  rationally; see loc.cit.).

%See also Remark 2.4.3 of ibid. for  more details.

3. Certain  {\it weight spectral sequences} considered in \S2 of \cite{janhasse} are (essentially) examples of  Chow-weight spectral sequences. The author strongly suspects that some of the results of ibid. could be re-proved and extended using our methods.  

\end{rema}

\subsection{On birational motives}\label{sbir}

Now we prove that our methods  easily yield certain
properties of birational motives and sheaves (some of them were already proved in 
\cite{kabir}; yet note that we extend them to motives with $\zop$-coefficients for $\cha k=p$).
 
We define $\dmgmp^0$ as the idempotent completion of the
localization of $\dmgep$ by $\dmgep(1)=\dmgep\otimes \zop(1)$.
$\dmgmp^0$ is called the category of birational motives since
$\dmgep(1)$ is exactly the triangulated category generated by
$\co(\mgp(U)\to \mgp(X))$ for $U,X\in \sv$, $U$ is dense in $X$.
Indeed, this statement follows easily from (\ref{gys}) (and was
proved in Proposition 5.2 of ibid. in detail).

For the full embedding of categories $\chowep(1)[2]\subset\chowep$
we consider the fraction category $\frac{\chowep}{\chowep(1)[2]}$
defined via Definition \ref{dwstr}(IX); %(VIII); 
$\chowp^0$ is
its idempotent completion.

\begin{pr} 1. There exists a bounded weight structure
$w_{bir}$ for $\dmgmp^0$ whose heart is $\chowp^0$.

2. The image of $\mgp(X)$ in $\dmgmp^0$ belongs to $\hw_{bir}$ for any $X\in \sv$.
\end{pr}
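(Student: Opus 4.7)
My plan is in two stages. For Part 1, the plan is to deduce $w_{bir}$ from the Chow weight structure $w_{\chow}$ on $\dmgep$ (Theorem \ref{tres}(2)) by applying Proposition \ref{pbw}(\ref{iloc}) with $\du=\dmgep(1)$, and then lifting to the idempotent completion via Proposition \ref{pbw}(\ref{iidemp}). The hypothesis to check is that $w_{\chow}$ restricts to a weight structure on the triangulated subcategory $\dmgep(1)\subset \dmgep$: by Remark \ref{rttw}, $-\otimes \zop(1)[2]$ is weight-exact, and it is moreover a full embedding of $\dmgep$ into itself (as used in the proof of Proposition \ref{pwcgm}), with essential image $\dmgep(1)=\dmgep(1)[2]$. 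Hence it transports $w_{\chow}$ and the heart $\chowep$ to a bounded weight structure on $\dmgep(1)$ with heart $\chowep(1)[2]$. Proposition \ref{pbw}(\ref{iloc}) then produces a bounded weight structure on $\dmgep/\dmgep(1)$ with heart the Karoubi-closure of $\chowep/\chowep(1)[2]$, and Proposition \ref{pbw}(\ref{iidemp}) lifts this to a bounded weight structure on $\dmgmp^0$ with heart the idempotent completion of the previous heart, namely $\chowp^0$.

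For Part 2, fix $X\in \sv$ with $m=\dim X$. By Theorem \ref{tres}(3), $\mgp(X)\in \dmgep^{[0,m]}\subset \dmgep^{w_{\chow}\ge 0}$, so the image of $\mgp(X)$ in $\dmgmp^0$ automatically lies in $(\dmgmp^0)^{w_{bir}\ge 0}$. The non-trivial task is to show that the image also lies in $(\dmgmp^0)^{w_{bir}\le 0}$. My plan is to construct a distinguished triangle $B[-1]\to \mgp(X)\to A\to B$ in $\dmgep$ with $A\in \chowep$ and $B\in \dmgep(1)$: then $B$ vanishes in $\dmgmp^0$, and the image of $\mgp(X)$ becomes isomorphic there to $A\in \chowep\subset \hw_{bir}$. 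The existence of such a triangle with $A\in \chowep$ and $B\in \dmgep^{[1,m]}$ follows from a weight decomposition at level $0$ (using Proposition \ref{pbw}(\ref{ibougen}) together with the boundedness of $w_{\chow}$); the issue is to arrange additionally that $B\in \dmgep(1)$.

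This refinement $B\in \dmgep(1)$ is the main obstacle. In characteristic zero it is immediate: one takes $A=\mgp(\bar X)$ for a smooth projective compactification $\bar X$, and $B$ is the iterated Gysin contribution of the boundary, consisting of codimension $\ge 1$ pieces of the form $\mgp(Z)(c)[2c]$ and hence lying in $\dmgep(1)$ (compare Theorem \ref{tres}(4)). Over a perfect field of positive characteristic no such $\bar X$ need exist, so I would use Proposition \ref{pgab}: for each $l\neq p$ it supplies a dense open $U\subset X$ and a finite flat morphism $P'\to U$ of degree prime to $l$ with $P'\hookrightarrow P\in \spv$, and Theorem \ref{tres}(4) gives $\mgp(X)\cong \mgp(U)$ and $\mgp(P')\cong \mgp(P)$ in $\dmgmp^0$. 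Combining the $\zll$-retract $\mgl(U)\hookrightarrow \mgl(P')$ from Corollary \ref{cgab}(1) (valid for each individual $l\neq p$) with the conservativity of $\{-\otimes\zll\}_{l\ne p}$ of Proposition \ref{pbmot}(2), and applying the stratified Gysin construction of Corollary \ref{cgab}(2) inductively on $m$, should produce the required triangle and thus establish Part 2.
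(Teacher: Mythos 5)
Your Part~1 matches the paper's proof exactly: restrict $w_{\chow}$ to $\dmgep(1)$ via the weight-exact full embedding $-\otimes\zop(1)[2]$, pass to the Verdier quotient with Proposition~\ref{pbw}(\ref{iloc}), then idempotent-complete with Proposition~\ref{pbw}(\ref{iidemp}).

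Your Part~2 takes a genuinely different route, and there is a gap in it. You try to exhibit a distinguished triangle $B[-1]\to\mgp(X)\to A\to B$ in $\dmgep$ with $A\in\obj\chowep$ and $B\in\obj\dmgep(1)$, so that the image of $\mgp(X)$ becomes literally isomorphic to a Chow motive in the quotient. Two problems. First, a weight decomposition of $\mgp(X)$ only gives $A\in\obj\chowep$ and $B\in\dmgep^{w_{\chow}\ge 0}$; there is no reason for $B$ to lie in $\dmgep(1)$, and nothing in the weight-structure axioms forces this refinement. (Also $B\in\dmgep^{[1,m]}$ is not automatic: the weight decomposition only gives $B\in\dmgep^{w\ge 0}$.) Second, and more seriously, your proposed fix—combining the $\zll$-retracts $\mgl(U')\hookrightarrow\mgl(P')$ from Corollary~\ref{cgab}(1) with the conservativity of $\{-\otimes\zll\}_{l\neq p}$—does not produce the desired $\zop$-triangle. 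Conservativity (Proposition~\ref{pbmot}(2)) only detects when a given $\zop$-morphism is an isomorphism; it does not let you assemble a family of splittings, one for each $l$ with possibly incompatible choices of $P$, $U'$, and splitting map, into a single $\zop$-level retraction or isomorphism. This is precisely the ``globalization'' obstruction that the weight-structure formalism was designed to sidestep. In fact, even the truth of your claimed triangle is questionable: the theorem only yields that the image of $\mgp(X)$ lies in $\chowp^0$, which is the \emph{Karoubi closure} of $\frac{\chowep}{\chowep(1)[2]}$ in $\dmgmp^0$; a priori it could be only a retract of a Chow motive in the quotient, not isomorphic to one.

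The paper's argument avoids constructing anything explicit. It defines $H$ to be the class of images of $\mgp(X)$ ($X\in\sv$) in $\dmgmp^0$, and proves that $H$ is \emph{negative}. Negativity is a collection of Hom-vanishing statements, and vanishing of a $\zop$-module can be checked after $-\otimes\zll$ for all $l\neq p$ (by Proposition~\ref{pbmot}(1)); this is a legitimate reduction, unlike transporting retractions. In $\dmgml^0$, Corollary~\ref{cgab}(2) shows via the Gysin triangles that the image of $\mgl(U)$ is a retract of the image of $\mgl(P)$ for a suitable $P\in\spv$, so negativity of $H$ reduces to negativity of the images of $\mgp(P)$, which is already known from Part~1. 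Then Proposition~\ref{pbw}(\ref{igen}) provides a unique bounded weight structure on $\dmgmp^0$ with $H$ in the heart; since that heart contains $\chowp^0$, uniqueness forces it to coincide with $w_{bir}$, whence $H\subset\hw_{bir}$. If you want to keep your strategy of bounding $\mgp(X)$ from above by $0$, you would still need to reformulate ``$\mgp(X)\in(\dmgmp^0)^{w_{bir}\le 0}$'' as a Hom-vanishing statement to make the $\zll$-reduction legitimate; and at that point you have essentially rediscovered the negativity argument.
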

\begin{proof}

1. Immediate from  Proposition \ref{pbw}(\ref{iloc}--\ref{iidemp}).

2. Let $H$ be the class of  images of $\mgp(X)$, $X\in \sv$, in $\dmgmp^0$. We prove that $H$ is  negative (in $\dmgmp^0$). To this end it obviously suffices to prove the natural analogues of this statement for $\dmgml^0$ (for all $l\neq p$). Then  Corollary \ref{cgab}(2) implies: it suffices to prove negativity for the images of $\mgp(P)$, $X\in \spv$ (in $\dmgmp^0$).
Hence the result  follows from assertion 1.

Proposition \ref{pbw}(\ref{igen}) yields: there exists a weight structure for $\dmgmp^0$ whose heart contains $H$. Since this heart also contains $\chowp^0$, we obtain that this new weight structure is exactly $w_{bir}$ (by the uniqueness of the weight structure given by loc.cit.). Hence $H\subset \dmgmp^0{}^{w_{bir}=0}$.

See also  Remark 4.9.2(2) of \cite{bger} for an alternative proof. 
\end{proof}

\begin{rema}

1. One of the main consequences of assertion 1 is the calculation of $\dmgmp^0(X,Y[i])$ for $X,Y\in \obj\chowp^0(\subset \obj \dmgmp^0)$, $i\ge 0$. 

2. Certainly, the same method works if $\cha k=0$; then one can
take integral coefficients.

3. We also obtain a conservative weight complex functor
$\dmgmp^0\to K^b(\chowep^0)$ and an isomorphism $K_0(\chowep^0)\to
K_0(\dmgmp^0)$.

\begin{comment}
4. In \S4.9 of \cite{bger} it was explained that, starting from a natural extension of
$w_{bir}$ to the category of
 {\it birational comotives} (with $\zop$-coefficients; this is a certain completion of $\dmgmp^0$) one
 can 'glue' the {\it Gersten weight structure} for {\it comotives}.
 The latter weight structure yields (generalized) coniveau spectral
 sequences (as weight spectral sequences corresponding to it). Note
 that one can (certainly) glue the Chow weight structure for comotives
 from (copies of) the extension of $w_{bir}$. (Almost) the only
 'formal'
 difference between the Chow and Gersten weight structures is that the
 former is preserved by $-(1)[2]$ (as explained in Remark \ref{rttw}),
 whereas the latter  is preserved by $-(1)[1]$; still note that weight
 spectral sequences corresponding to them are very much distinct!
\end{comment}
%So, we obtain that this observation can be applied to (co)motives over $k$
%with $\zop$-coefficients.
\end{rema}

Below we will also need {\it birational sheaves}. The following statements could probably be proved using weight structures; yet  'sheaf-theoretic' proofs are easier. The proof of assertion I1 was (essentially) copied from   \S7 of \cite{kabir}. %assertion 1??@@

\begin{lem}\label{lbir}
I Let $S\in \obj HI\subset \obj \dme$.

1.  Let $S$ be {\it birational} i.e.  suppose that $S(f)$ is an isomorphism for
 any open dense embedding $f$ in $\sv$. Then $\dme(\mg(U),S[i])=\ns$ for any $U\in \sv$, $i>0$.

 2. $S$ is birational whenever
$\dme(X(1),S)=\ns$ for any $X\in \obj \dmge$.

II 1. The category $\hip_{bir}$ of birational $\zop$-module sheaves is an exact abelian subcategory of $\hip$.

2. Let $S\in \obj \hip$,  $S^0\in \obj \hip_{bir}$, $f\in \hip(S^0,S)$.  Then $f$ is a monomorphism whenever $f(P):S^0(P)\to S(P)$ is injective for any $P\in \spv$.
 
 3. $f:S\to S'$ is an isomorphism for $S,S\in \obj \hip_{bir}$ whenever $f(P)$ is bijective for any $P\in \spv$.

\end{lem}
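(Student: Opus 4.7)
For assertion I.1, I would invoke Proposition \ref{pbmot}(4) to identify $\dme(\mg(U), S[i])$ with the $i$-th Nisnevich hypercohomology of $U$ with coefficients in $S$; since $S$ lies in the heart $HI$, this is $H^i_{Nis}(U, S)$, which must vanish for $i > 0$. The strategy (essentially copied from \S7 of \cite{kabir}) is to exploit that a birational $S \in HI$ depends only on the function field of each connected component: for connected $U$, the equalities $S(U) = \inli_{U' \subset U \text{ dense}} S(U') = S(\eta_U)$ show that $S$ factors through the category of function fields, so any Nisnevich cover can be refined to trivialize over a dense open, killing higher Nisnevich cohomology. For I.2, given an open dense embedding $f: U \hookrightarrow V$, I would stratify $V \setminus U$ by smooth locally closed subvarieties of positive codimension to obtain $U = U_0 \subset U_1 \subset \cdots \subset U_n = V$; iterating the Gysin triangle \eqref{gys} along this filtration shows that $\co(\mg(U) \to \mg(V))$ is built from cones $\mg(Z_i)(c_i)[2c_i]$ with $c_i \geq 1$, hence lies in the triangulated subcategory $\lan \dmge(1) \ra$. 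Since the hypothesis is stable under shifts, the long exact sequence obtained by applying $\dme(-, S)$ yields $S(V) \cong S(U)$.

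For II.1, Proposition \ref{pbmot}(7) makes $S(f)$ automatically injective for any open dense $f$, so birationality reduces to surjectivity. Given a short exact sequence $0 \to S' \to S \to S'' \to 0$ in $\hip$, I would view it as a distinguished triangle in $\dmep$ and apply $\dmep(\mgp(V), -)$ and $\dmep(\mgp(U), -)$; by I.1 the connecting terms $\dmep(\mgp(\cdot), S'[1])$ vanish whenever $S'$ is birational, making the long exact sequences sectionwise short exact. Standard diagram chases (snake lemma on the $V$-to-$U$ ladder, 5-lemma for extensions) then show that $\hip_{bir}$ is closed in $\hip$ under kernels, surjective quotients (hence under images and cokernels), and extensions.

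For II.2 and II.3, fix a connected $V \in \sv$; for each prime $l \neq p$, Proposition \ref{pgab} furnishes a dense open $V' \subset V$ and a finite flat morphism $P' \to V'$ of degree $d_l$ prime to $l$, with $P' \subset P \in \spv$ dense. For II.2, let $x \in \ker f(V)$: birationality of $S^0$ together with Proposition \ref{pbmot}(7) promote injectivity of $f(P)$ to injectivity of $f(P')$, so the pullback of $x$ to $S^0(P')$ vanishes, and applying the transfer along $P' \to V'$ yields $d_l \cdot x|_{V'} = 0$ in $S^0(V') = S^0(V)$. Running $l$ over all primes $\neq p$, the annihilator of $x$ in the $\zop$-module $S^0(V)$ contains an integer coprime to every such $l$; since every proper ideal of $\zop$ is generated by an integer coprime to $p$, hence divisible by some prime $q \neq p$, choosing $l = q$ forces $x = 0$. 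For II.3, injectivity is II.2; for surjectivity, I would lift $y|_P \in S'(P)$ to $z \in S(P)$ using bijectivity of $f(P)$ and birationality of $S, S'$, obtaining $f(V)(\operatorname{tr}(z)) = d_l \cdot y$ in $S'(V)$, then apply the same $\zop$-annihilator argument to $\cok f(V)$.

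The principal obstacle is the passage from $\spv$-information to $\sv$-information in II.2--II.3, since Gabber's theorem does not in positive characteristic produce a single birational smooth projective model of $V$ — only a prime-to-$l$ covering by such. The device carrying the proof is the elementary structure of proper ideals of $\zop$: each is divisible by some prime $q \neq p$, so the collective constraints ``$d_l \cdot x = 0$ for varying $l \neq p$'' are strong enough to force $x = 0$. This is precisely why the lemma is formulated for $\zop$- rather than integral coefficients, and is the essential new input beyond the characteristic zero argument of \cite{kabir}.
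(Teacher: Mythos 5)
Your overall architecture is sound and several of your routes are genuine alternatives to the paper's, but there is one real gap, in I.1.

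\textbf{The gap in I.1.} Your reduction via Proposition \ref{pbmot}(4) to showing $H^i_{Nis}(U,S)=0$ for $i>0$ is correct, but the justification you give --- that $S$ ``factors through function fields'' so Nisnevich covers ``can be refined to trivialize over a dense open'' --- is not a proof. Nisnevich cohomology of a Zariski-locally-constant sheaf does \emph{not} automatically vanish: over a dense open the Nisnevich site still carries nontrivial covers coming from residue-field extensions, and constant sheaves can have nonzero higher Nisnevich cohomology. The correct argument, which the paper gives, proceeds in two steps: (i) a birational $S$ is locally constant in the Zariski topology (on an irreducible $U$ even constant), hence flasque, so $H^i_{Zar}(U,S)=0$ for $i>0$; (ii) because $S$ is a \emph{homotopy invariant Nisnevich sheaf with transfers}, Theorem 5.7 of \cite{3} identifies $H^i_{Zar}(-,S)$ with $H^i_{Nis}(-,S)$. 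Step (ii) is the essential technical input your sketch omits, and without it the vanishing is unjustified. Since I.1 feeds into your arguments for II.1 (sectionwise exactness), this gap propagates.

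\textbf{I.2.} You only prove the forward implication. The paper treats the statement as an equivalence and also proves the converse (birational $\Rightarrow$ $\dme(X(1),S)=\ns$), via the split inclusion $\mg(U)\oplus\mg(U)(1)\hookrightarrow\mg(U\times G_m)$, the cokernel identification $\dme(\mg(U)(1),S[i])\cong\cok(S(U\times\af^1)[i+1]\to S(U\times G_m)[i+1])$, and I.1. If the ``whenever'' is read as an iff (as the paper's usage and proof suggest), this direction is missing from your proposal.

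\textbf{II.1, II.3: valid but different routes.} For II.1, the paper argues directly that the \emph{presheaf} cokernel of a map of birational sheaves is a locally constant Zariski sheaf and then invokes Proposition 5.5 of \cite{3} to conclude it already lies in $\hip$; extensions are handled via I.1. You instead work entirely with the sheaf-level kernels/cokernels and use I.1 to get sectionwise short exactness, then diagram-chase. Both are correct; yours trades the input of Proposition 5.5 of \cite{3} for repeated use of I.1 (which itself needs Theorem 5.7 of \cite{3}). For II.3, the paper's argument is much shorter: it invokes Theorem \ref{tres}(1) to pass from the bijectivity of $f_*$ on $\dmep(\mgp(P)[i],-)$ to bijectivity on all of $\dmgep$. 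Your transfer-and-annihilator argument (mirroring II.2) avoids Theorem \ref{tres}(1) altogether, which is a more elementary and essentially self-contained route depending only on Gabber and Proposition \ref{pbmot}(7); this is a genuine and potentially useful alternative. For II.2 your argument is essentially the paper's, rephrased: the paper reduces to $\zll$-coefficients by exactness of $-\otimes\zll$ and then applies the retraction $\mgl(U')\mid\mgl(P')$ from Corollary \ref{cgab}, while you keep $\zop$-coefficients and run the annihilator-ideal argument directly; the two are equivalent, and your observation about ideals of $\zop$ being divisible by a prime $q\neq p$ is exactly what makes the reduction work.

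In short: fill in the Zariski-to-Nisnevich comparison (Theorem 5.7 of \cite{3}) in I.1 and supply the converse direction of I.2; the rest is correct, with II.3 being an interesting alternative to the paper's use of the central theorem.
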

\begin{proof}

I1. Since $S$ is birational, it is locally constant in the Zariski  topology (on $\sv$); hence it has trivial higher Zariski cohomology. Since $S$ is homotopy invariant, we obtain the same vanishing for Nisnevich cohomology by Theorem 5.7 of
\cite{3}. It remains to apply Proposition \ref{pbmot}(4).

2. Let $S$ satisfy the second condition. Then (\ref{gys}) yields that $S(f)$ is an isomorphism if $V\setminus U$ is smooth and everywhere of codimension $c$ in $V$ (for $f:U\to V$). Since any open embedding can be factored as the composition of embeddings satisfying this condition, we obtain that $S$ is birational.  

Conversely, let $S$ be birational. It suffices  to prove that $\dme(\mg(U)(1),S[i])=\ns$ for any $U\in \sv$, $i\in \z$. We have: $\mg(U\times \af^1)\cong \mg(U)$, $\mg(U\times G_m)=\mg(U)\bigoplus \mg(U)(1)$. We obtain: $$\begin{gathered} \dme(\mg(U)(1),S[i])\cong  \\ \cok( \dme(\mg(U\times \af^1),S[i+1])\to \dme(\mg(U\times G_m),S[i+1])).\end{gathered}$$ Applying  Proposition \ref{pbmot}(4), we obtain that this kernel is zero: for $i+1<0$ since sheaves have no negative cohomology; for $i+1=0$ since $S$ is birational, and for $i+1>0$ by assertion I1.

II 1. The kernel of a morphism of birational sheaves  is obviously birational. Next, the presheaf cokernel of such a morphism is a birational presheaf; hence it is a locally constant Zariski sheaf. Since it is also a homotopy invariant presheaf with transfers, we obtain that it belongs to $\obj \hip$ by Proposition 5.5 of \cite{3}; so it is a birational object of $\hip$. 

Lastly, an extension of birational sheaves yields a long exact sequence of their cohomology groups (at any section). Hence assertion I1 yields that such an extension is also an extension of presheaves; so it is obviously birational.

2. If $f$ is monomorphic, it is injective at all sections.

Now we prove the converse statement. It suffices to check it for $S$ and $S^0$ replaced by $S\otimes \zll$ and $S^0\otimes \zll$ (for all $l$); so we can assume that $S,S^0\in \obj \hil$. We fix some $l$.

We should check that 
 $f(U)$ yields an injection $S^0(U)\to S(U)$ for any $U\in \sv$.
 
 We fix some $U$ and apply  Corollary \ref{cgab}. In the notation of loc.cit., we have a commutative diagram $$\begin{CD}
S^0(P)@>{g}>>S^0(P')\\
@VV{h}V@VV{i}V \\
S(P)@>{j}>>S(P')\end{CD}$$
$g$ is bijective since $S_0$ is birational; $h$ is injective by our assumption; $j$ is injective by Proposition \ref{pbmot}(7);
 hence $i$ is injective also.

Since $S(U')$ is a retract of $S(P')$ and the same is true for  $S^0$, we obtain a similar injection for $U'$.
We have a diagram
$$\begin{CD}
S^0(U)@>{a}>>S^0(U')\\
@VV{b}V@VV{c}V \\
S(U)@>{d}>>S(U')\end{CD}$$
Now,  $d$ is injective, $a$ is bijective.  Since $c$ is injective, $b$ is injective also.

3. If sheaves are isomorphic, all their sections are isomorphic also.

Conversely, let $f(P)$ be an isomorphisms for any $P\in \spv$. By  Proposition \ref{pbmot}(4) and assertion I1 we obtain that $f_*:\dmep(\mgp(P)[i],S)\to \dmep(\mgp(P)[i],S')$ is bijective for any $i\in \z$ and $P\in \spv$. Then  Theorem \ref{tres}(1) yields that $S(U)\cong S'(U)$ for any $U\in \sv$.

\end{proof}

\subsection{$\tcho$ and unramified cohomology} \label{sunr}

We prove that $\dmep$ supports a certain {\it Chow} $t$-structure. Below $\htcho$ will denote its heart; $H^j_{\tcho}(Y)$ (resp. $H^j_{t}(Y)$) for $j\in \z$, $Y\in \obj\dmep$ will denote the $j$-th cohomology of $Y$ with respect to $\tcho$ (resp. with respect to $t$); so  $H^j_{\tcho}(Y)\in \obj \htcho\subset\obj\dmep$.

Note also:  Lemma \ref{lbir}(II1)  implies that any sheaf $S\in\obj \hip$ has a maximal birational subsheaf (since any two birational subsheaves of $S$ are subobjects of some single  birational subsheaf of $S$).

\begin{pr}\label{ptcho}
1. There exists a $t$-structure $\tcho$ for $\chowep$ whose heart is isomorphic to $\adfu(\chowep^{op}, \ab)$; this isomorphism is given by restricting  $\dmep(-,Y)$ to $\chowep\subset \dmep$ for $Y\in \obj \htcho\subset \obj \dmep$.

2. Let $\dmep^{\tcho\ge 0}$ (resp. $\dmep^{\tcho\le 0}$) denote the 'non-negative' (resp. 'non-positive') part of $\tcho$. Then we have $\dmep(X,S)=\ns$ if either $X\in \dmgep^{w_{\chow}\le 0}$ and $S\in \dmep^{\tcho\ge 0}[-1]$, or $X\in \dmgep^{w_{\chow}\ge 0}$ and $S\in \dmep^{\tcho\le 0}[1]$.

3. $\dmep^{t\ge 0}\subset \dmep^{\tcho\ge 0}$.

4.  $\dmep^{\tcho\le 0}\subset \dmep^{t\le 0}$.

5. $S\in \hip$ belongs to $\htcho$ whenever it is  birational in the sense of Lemma \ref{lbir}. 

6. For any $S\in \hip$ we have: $S^0=H^0_{\tcho}S$ is the maximal birational subsheaf of $S$ (in $\hip$). 

If $V\in \sv$ possesses a smooth projective compactification $P$, then the image of $S^0(V)$ in $S(V)$ equals the image of $S(P)$ in $S(V)$.

\end{pr}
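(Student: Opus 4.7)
Parts 1 and 2 should be obtained by invoking the general construction of a $t$-structure adjacent to a weight structure on a subcategory of compact generators, developed in \cite{bws} (and applied in \cite{bger}). The hypotheses required---compactness of $\dmgep$ in $\dmep$ (Proposition \ref{pbmot}(6)), boundedness of $w_{\chow}$ with heart $\chowep$ (Theorem \ref{tres}(2)), and weak generation of $\dmep$ by $\chowep$ (established in the proof of Theorem \ref{tres})---are all available. Explicitly one takes $\dmep^{\tcho\ge 0}=(\dmgep^{w_{\chow}\le -1})^{\perp}$ and $\dmep^{\tcho\le 0}=(\dmgep^{w_{\chow}\ge 1})^{\perp}$; the identification $\htcho\cong\adfu(\chowep^{op},\ab)$ via restricted Yoneda is part of the same package, and Part 2 is then a direct unwinding of these definitions.

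Parts 3--5 are short orthogonality checks. For Part 3, Proposition \ref{pbmot}(8) places $\chowep\subset\dmep^{t\le 0}$, so $\chowep[k]\subset\dmep^{t\le -1}$ for $k\ge 1$; extension-stability of $\dmep^{t\le -1}$ propagates this to $\dmgep^{w_{\chow}\le -1}\subset\dmep^{t\le -1}$, yielding the claimed orthogonality. For Part 4, any $Y\in\dmep^{\tcho\le 0}$ satisfies $\dmep(\mgp(P),Y[i])=0$ for $P\in\spv$ and $i\ge 1$; Theorem \ref{tres}(3) presents each $\mgp(U)$ (for $U\in\sv$) as an iterated extension of retracts of $\chowep[-j]$ with $0\le j\le\dim U$, so the vanishing propagates to $\dmep(\mgp(U),Y[i])=0$ for $i\ge 1$, and Proposition \ref{pbmot}(4) then gives $H^i_t(Y)=0$. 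For Part 5, any $S\in\hip$ sits in $\dmep^{\tcho\ge 0}$ by Part 3, and when $S$ is birational Lemma \ref{lbir}(I1) gives $H^k(P,S)=0$ for $P\in\spv$ and $k\ge 1$, placing $S$ in $\dmep^{\tcho\le 0}$.

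Part 6 is the substantive content. For $S\in\hip\subset\dmep^{\tcho\ge 0}$, set $S^0:=H^0_{\tcho}S=\tau_{\tcho}^{\le 0}S$ and consider the triangle $S^0\to S\to C$ with $C\in\dmep^{\tcho\ge 1}$; Part 4 already places $S^0\in\dmep^{t\le 0}$. I would proceed in four steps: (i) promote $S^0$ to an object of $\hip$ by chasing the long exact sequence of $t$-cohomology for the triangle, using $H^i_t(S)=0$ for $i\ne 0$; (ii) verify $S^0$ is birational by applying $\dmep(-,S^0)$ to the Gysin-type triangle $\mgp(V)\to\mgp(P)\to Z$ of Theorem \ref{tres}(4) for an open dense $V\hookrightarrow P$ in $\sv$, where the shift $Z[-1]\in\dmgep^{w_{\chow}\ge 1}$ provides the needed orthogonality via Part 2; (iii) compute $S^0(P)=S(P)$ for $P\in\spv$ by applying $\dmep(\mgp(P),-)$ to the defining triangle---Part 2 kills $\dmep(\mgp(P),C)$, and once $S^0$ is birational $\dmep(\mgp(P),C[-1])=H^1(P,S^0)=0$ by Lemma \ref{lbir}(I1); (iv) maximality from the universal property of $\tau_{\tcho}^{\le 0}$: any birational subsheaf $T\hookrightarrow S$ lies in $\htcho$ by Part 5, so the inclusion factors canonically as $T\to S^0\to S$, and Lemma \ref{lbir}(II2) guarantees $T\to S^0$ remains monic. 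The section identity for $V\subset P$ then follows: $S^0(V)\cong S^0(P)=S(P)$ as subsets of $S(V)$ (the first by birationality of $S^0$, the second from (iii)), which under the injection $S(P)\hookrightarrow S(V)$ of Proposition \ref{pbmot}(7) coincides with the image of $S(P)\to S(V)$. The main obstacle is the interlocked steps (i) and (ii), where one must simultaneously establish that the truncation $H^0_{\tcho}S$---a priori an abstract object of $\adfu(\chowep^{op},\ab)$---is genuinely a homotopy invariant sheaf and is birational, bootstrapping between the two $t$-structures $t$ and $\tcho$ on $\dmep$ and the concrete sheaf-theoretic input provided by Lemma \ref{lbir}.
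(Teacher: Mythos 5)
Your Parts 1--2 match the paper: both invoke the adjacent--$t$-structure machinery of \cite{bws} (Theorem 4.5.2 there), using compactness, boundedness, and weak generation exactly as you list. Parts 3--4 are correct, though more laboured than necessary --- the paper obtains Part 3 directly from the characterization of $\dmep^{\tcho\ge 0}$ by orthogonality to shifts of Chow motives, and Part 4 is then purely formal from the orthogonality description of the $\le 0$-parts of the two $t$-structures; your route through Theorem \ref{tres}(3) and hypercohomology also works but is a detour. For Part 5 you prove only the implication ``birational $\Rightarrow$ in $\htcho$''. The paper also proves (and, in Part 6, uses) the converse, namely that $S\in \hip\cap\obj\htcho$ is birational, via $t$-exactness of $S(f)$ and orthogonality of $\co(\mg(U)\to\mg(V))\in\dmgep^{w_{\chow}\ge 0}$ against $\dmep^{\tcho\le 0}[1]$; your own step (ii) essentially rediscovers this, so this is a presentational rather than substantive omission.

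The real gap is step (i) of your Part 6. You cannot conclude $S^0=\tau_{\tcho}^{\le 0}S\in\hip$ by chasing the $t$-cohomology long exact sequence of $S^0\to S\to C$. From that sequence and $H^i_t(S)=0$ for $i\neq 0$ you get $H^{i-1}_t(C)\cong H^i_t(S^0)$ for all $i<0$, so to kill $H^i_t(S^0)$ for $i<0$ you would need $C\in\dmep^{t\ge -1}$. But $C$ only lies in $\dmep^{\tcho\ge 1}$, and the nesting you have established goes the \emph{other} way: $\dmep^{t\ge 0}\subset\dmep^{\tcho\ge 0}$ (Part 3), which gives no control on the $t$-connectivity of a $\tcho$-connective object. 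So the LES alone does not ``promote $S^0$ to $\hip$''. The paper's proof of $S^0\in\dmep^{t\ge 0}$ is the substantive core of Part 6: one shows $\dmep(\mgp(Z)(j)[i],S^0)=\ns$ for $i>0$ or $j>0$ by induction on $\dim Z+j$, reducing to $\zll$-coefficients, treating $Z\in\spv$ via $\chowel(j)[2j]\subset\chowel$ and Proposition \ref{pbmot}(7), and then descending from $\spv$ to general $Z\in\sv$ with Corollary \ref{cgab}(2) --- that is, Gabber's resolution is invoked a second time, and the Tate twists $j>0$ must be tracked because the Gysin triangles in the descent introduce them. Once $S^0\in\dmep^{t\ge 0}$ (hence $S^0\in\hip$ by Part 4, birational by Part 5), your steps (ii)--(iv) go through; your step (iv), using the universal property of $\tau_{\tcho}^{\le 0}$ to factor any birational $T\hookrightarrow S$ through $S^0$, is a nice variant of the paper's argument, which instead compares $S^0$ with the a priori maximal birational subsheaf $S'$ and applies Lemma \ref{lbir}(II3). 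But without the explicit vanishing argument your proof of Part 6 does not get off the ground.
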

\begin{proof}
1, 2. $\chowep$ weakly generates $\dmep$ by  Theorem \ref{tres}(1)
 (cf. also the proof of loc.cit.). Now the assertions are immediate from  Theorem 4.5.2(I1) of \cite{bws}.
 
 3. Obvious from assertion 1.
 
 4. Immediate from assertion 2 (since for any $t$-structure $t'$  for $\cu$ we have $\cu^{t'\le 0}=\cu^{t'\ge 0}{}^\perp$).
 
 5. Let $S\in \hip\cap \obj \htcho$. We should prove that for $f:U\to V$ being an open dense embedding in $\sv$ the  map $S(f)$ is bijective. Since $S\in \hip$, $S(f)$ is an injection by  Proposition \ref{pbmot}(7). On the other hand, by  Proposition \ref{pbmot}(4) we have an exact (in the middle) sequence  $S(V)\to S(U)\to \dmep(\co(\mg(U)\to\mg(V)),S[1])$. Now, $\co(\mg(U)\to\mg(V))\in \dmgep^{w_{\chow}\ge 0}$ by  Theorem \ref{tres}(4); hence $\dmep(\co(\mg(U)\to\mg(V)),S[1])=\ns$ by assertion 2. We obtain that $S(f)$ is also surjective.

 Conversely, let $S\in \hip$ be birational. By  Lemma \ref{lbir}(1),  $\dmep(\mgp(P),S[i])=\ns$ for any $i>0$, $P\in\spv$. Then assertion 1 implies that $S\in \dmep^{\tcho\le 0}$. It remains to note that $S\in \dmep^{\tcho\ge 0}$ by assertion 3.

 6.  First we prove that $\dmep(\mgp(Z)(j)[i],S^0)=\ns$ if $i>0$ or $j>0$, $Z\in \sv$, by induction on $\dim Z+j$. Obviously, it suffices to prove all $\zll$-analogues of this statement: we fix some $l$.
 
 For  $\dim Z=0$ the assumption is obvious. Now suppose that for $S\in \hil$ we have 
 $\dmel(\mgl(Z)(j)[i],S^0)=\ns$ if $i$ or $j$ is $>0$ and  $\dim Z+j< r$ (for some $r\ge 0$). We verify this equality for $Z=U$,  $U\in \sv$, $\dim U+j=r$.
 
 First suppose that $U\in \spv$. %Let $j>0$. 
 Since $\chowel(j)[2j]\subset \chowel$,
   by the definition of $S^0$ we have $\dmel(\mgl(U)(j)[i],S^0)=0$ for $i\neq 2j$. It remains to consider the case $i=2j>0$. We use the fact that $\mgl(\af^j\times U)=\mgl(U)$ and $\mgl(U\times \p^1)=\mgl(U)\bigoplus \mgl(U)(1)[2]$. It follows that $$\begin{aligned} \dmel(\mgl(U)(j)[2j],S^0)=\dmel(\mgl(U)(j)[2j],S)\\ \subset \ke (S(U\times (\p^1)^j)\to S(U\times \af^j)).\end{aligned}$$ Now, this kernel is zero  by  Proposition \ref{pbmot}(7).

 It remains to apply Corollary \ref{cgab}(2). Since  our assumption 
 is valid for $Z=P$, it is also true for $Z=P'$ in the notation of loc.cit.;
 here we use the fact that $\dmel(-,S^0)$ converts distinguished triangles in $\dmgel$ into long exact sequences. Since $S^0$ is also additive, we obtain the assumption for $Z=U'$, and hence also for $Z=U$. Our assumption is proved.
 
 We deduce that $S\in \dmep^{t\ge 0}$. Since it also belongs to $\htcho$; it is a birational sheaf by assertions 3 and 5.
 
 Now, for any $P\in \spv$ we have $S^0(P)\cong S(P)$ by the definition of $S^0$. Hence $S^0$ is a subsheaf of $S$ by  Lemma \ref{lbir}(II2). We also obtain the second half of the assertion.

 We denote the maximal birational subsheaf of $S$ by $S'$. Then $S^0$ is also a subsheaf of $S'$. We immediately obtain that $S^0(P)\cong S'(P)$ for any $P\in \spv$. Hence  loc.cit. allows us to conclude the proof.

\end{proof}

Now we relate the Chow $t$-structure with unramified cohomology; cf. 2.2 of \cite{merkunram}. Let $C\in \obj \dmep$.
Recall that the 
$i$-th unramified cohomology of $X\in \sv$ with coefficients in $C$ (we denote it by $H^i_{un}(X,C)$) is the intersection of images $H^i(\spe A,C)\to H^i(\spe k(X),C)$, where $A$ runs through all discrete valuation subrings of $k(X)$. %(cf. 2.2 of \cite{merkunr})??! \cite{bger}??!!
Here we define the cohomology of 'infinite intersections' of smooth varieties as the corresponding inductive limits. We note here that any 
geometric valuation (of rank $1$) of a function field $K/k$ comes from a non-empty smooth subscheme of some  smooth variety  $U$ such that $k(U)=K$, 
 since the singular locus of any normal variety has codimension $\ge 2$.

\begin{pr}\label{punram}
For any $X,C$ as above there is a natural isomorphism $H^i_{un}(X,C) \cong H^0_{\tcho}(H^i_t(C))(X)$.

\end{pr}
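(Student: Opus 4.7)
The plan is to let $S = H^i_t(C) \in \obj \hip$ and $S^0 = H^0_{\tcho}(S)$ (which by Proposition \ref{ptcho}(6) is the maximal birational subsheaf of $S$), and to realize both sides of the desired identification as subgroups of $S(\spe k(X))$ — the right-hand side via the inclusions $S^0(X) \hookrightarrow S(X) \hookrightarrow S(\spe k(X))$ (the second coming from Proposition \ref{pbmot}(7)), and the left-hand side via the following unfolding. Applying the hypercohomology spectral sequence $E_2^{p,q} = H^p(-, H^q_t(C)) \Rightarrow H^{p+q}(-, C)$ together with Proposition \ref{pbmot}(4), and using that a generic point has vanishing positive Nisnevich cohomology for any $F \in \hip$ while a DVR has Nisnevich cohomological dimension at most $1$, I obtain $H^i(\spe k(X), C) \cong S(\spe k(X))$ and a surjection $H^i(\spe A, C) \twoheadrightarrow S(\spe A)$ for every DVR $A$ of $k(X)$ containing $k$. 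Composing with the injection $S(\spe A) \hookrightarrow S(\spe k(X))$ (again by Proposition \ref{pbmot}(7)) then identifies
$$H^i_{un}(X, C) = \bigcap_A \imm(S(\spe A) \to S(\spe k(X))),$$
with $A$ running over the DVR subrings of $k(X)$ containing $k$.

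The inclusion $S^0(X) \subset H^i_{un}(X, C)$ is automatic from birationality: each such $A$ has the form $\mathcal{O}_{U, \eta_D}$ on some smooth model $(U, D)$ of $k(X)$ (by the remark on geometric valuations preceding the statement), and the identification $k(U) = k(X)$ yields a canonical iso $S^0(X) \cong S^0(\spe k(X)) \cong S^0(U)$, so every $s \in S^0(X)$ corresponds to an element of $S^0(U) \subset S(U)$ which restricts to $S(\spe A) = \inli_{U' \ni \eta_D} S(U')$ with image $s$ in $S(\spe k(X))$. For the converse I would first handle the case where $X$ possesses a smooth projective compactification $P$: by the valuative criterion of properness every DVR $A$ of $k(X)/k$ admits $\spe A \to P$, so $\imm(S(P) \to S(\spe k(X))) \subset H^i_{un}(X, C)$; conversely, the Gersten resolution of $S$ on the smooth $P$ (a standard consequence of Voevodsky's theory of homotopy invariant sheaves with transfers, in line with the results already cited in Lemma \ref{lbir}) shows that the intersection taken only over the divisorial DVRs $\mathcal{O}_{P, \eta_D}$ of $P$ already equals $\imm(S(P) \to S(\spe k(X)))$, and by Proposition \ref{ptcho}(6) this coincides with $S^0(X)$, settling the compactification case.

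The main obstacle is the passage from the compactification case to general $X$, since in characteristic $p$ we lack full resolution of singularities; this is precisely where Gabber's Proposition \ref{pgab} and a prime-by-prime transfer argument come in. Both sides depend only on $k(X)$ (the right side because $S^0$ is birational, the left by construction), so I may replace $X$ by any open dense subvariety. Fixing $l \neq p$ and applying Proposition \ref{pgab}, I obtain an open dense $X' \subset X$ and a finite flat $f\colon P' \to X'$ of degree $d$ prime to $l$, with $P'$ admitting a smooth projective compactification. For $x \in H^i_{un}(X, C) = H^i_{un}(X', C)$, the pullback $f^*x$ lies in $H^i_{un}(P', C) = S^0(P')$: the first equality because any DVR of $k(P')/k$ restricts to a DVR of $k(X')/k$ (standard for finite field extensions), and the second by the compactification case. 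Then the transfer relation $f_* \circ f^* = d \cdot \id$, combined with the fact that $S^0 \subset S$ is preserved by transfers (Lemma \ref{lbir}(II1)), yields $dx \in S^0(X') = S^0(X)$. Since $d$ is a unit in $\zll$, the cokernel $Q = H^i_{un}(X, C)/S^0(X)$ satisfies $Q \otimes_{\zop} \zll = 0$; as this holds for every $l \neq p$ and the maximal ideals of $\zop$ are precisely the $(l)$'s, an elementary argument on cyclic $\zop$-modules forces $Q = 0$, concluding the proof.
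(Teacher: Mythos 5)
Your proposal is correct, and the overall architecture matches the paper's (reduce to $i=0$ and a sheaf $S\in\obj\hip$, realize both sides inside $S(\spe k(X))$, settle the case of a smooth projective compactification, then descend to general $X$ via Gabber's prime-to-$l$ covers). The genuine differences are in how two of the steps are carried out. For the reduction to the sheaf level you use a hypercohomology spectral sequence together with Nisnevich cohomological-dimension bounds, whereas the paper appeals to Lemma 4.28 of \cite{3} (vanishing of higher Nisnevich cohomology of $\hip$-sheaves on smooth semi-local schemes), which actually gives an isomorphism $H^0(\spe A,C)\cong S(\spe A)$ rather than just the surjection you record --- though the surjection together with functoriality of the spectral sequence is already enough, as you correctly exploit. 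For the compactification case you invoke the Gersten resolution plus the valuative criterion directly, while the paper routes this through Rost's cycle modules (via \cite{degcyc}) and \S2.3 of \cite{merkunram}; these are the same underlying fact. The most substantive divergence is in the passage to general $X$: you argue \emph{directly}, pulling $x\in H^i_{un}(X,C)$ back along Gabber's finite flat $f:P'\to X'$ of degree $d$ prime to $l$, pushing forward, and concluding that $d\bar x=0$ in the cokernel $Q=H^i_{un}(X,C)/S^0(X)$, so that $Q_{(l)}=0$ for all $l\neq p$ and hence $Q=0$; the paper instead argues \emph{by contrapositive}, taking $s\notin C^0(\spe k(X))$, using the prime-to-$l$ trace to realize $C\otimes\zll(\spe k(X))$ as a retract of $C\otimes\zll(\spe L)$ for $L=k(P)$, applying the compactification case over $L$ to find a DVR $A'\subset L$ that misses $s$, and restricting to $A=A'\cap k(X)$. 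Your version buys a somewhat more transparent "transfer kills the cokernel prime-to-$l$, for all $l$" structure; the paper's version is a bit more parsimonious and localizes the argument to a single witness $A$. One small citation nit: the preservation of $S^0\subset S$ by transfers is better attributed to the fact that $S^0=H^0_{\tcho}(S)$ is a subobject in $\hip$ via Proposition \ref{ptcho}(6) (so that the inclusion is a morphism of sheaves with transfers), rather than to Lemma \ref{lbir}(II1) alone.
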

\begin{proof}
We can obviously assume that $i=0$. Moreover, we can (and will) also assume that $C=H^0_t(C)$, since for any smooth semi-local $U$ (in the sense of \S4.4 of \cite{3})) we have $C(U)\cong  H^0_t(C)(U)$ by Lemma 4.28 of ibid. %Hence we also have $C\in \dmep^{\tcho\ge 0}$. 
Hence $C$ yields a cycle module in the sense of Rost (see \cite{degcyc}). %%Deglise??

We denote $H^0_{\tcho}(C)$ by $C^0$. By Proposition \ref{ptcho}(6), %%part??
$C^0$ is a birational subsheaf of $C$. We should prove that $s\in C(\spe k(X))$ comes from all $C(\spe A)$ whenever it belongs to $C^0(\spe k(X))$. 

Applying $C$ to (\ref{gys}) and passing to the inductive limit we obtain a %short
 long exact sequence  $\ns\to C(\spe A)\to C(\spe k(X))\to C((\spe K)(1)[1])\to \dots$.
 Here $K$ is the residue field of $A$, and we define $C((\spe K)(1)[1])=\inli \dmep(\mg(U)(1)[1],C)$ for $U$ running through all smooth varieties with $k(U)=K$.
 
 Hence we should find out which $s$ vanish in all $C(\spe K(1)[1])$. If $s\in C^0(\spe k(X))$ then it vanishes in $C^0(\spe K(1)[1])$ since $C^0$ is birational; hence the image of $s$ in $C(\spe K(1)[1])$ is zero also. 
  %%twist??

It remains to prove that for any $s\notin C^0(\spe k(X))$ there exists an $A$ such that the image of $s$ in (the corresponding)  $C(\spe k(X))$ is non-zero.

First we prove this statement for all 
 $X$ that possess a smooth projective compactification $P$. 
 By Proposition \ref{ptcho},   $C^0(\spe k(X))$ is the image of  $C(P)$ in $C(\spe k(X))$.
Besides, $C(P)$ is exactly the unramified cohomology group in question (see \S2.3 of \cite{merkunram}). Hence such an $A$ exists in this case.

Now we prove our assertion in the general case. It obviously suffices to prove it for $C\otimes \zll$ for all $l\neq p$. We fix some $l$.

By Proposition \ref{pgab} there exists a (finite) extension $L$ of $k(X)$ of degree prime to
to $l$ such that $L=k(P)$ for some $P\in \spv$. Considering the trace of $L/k(X)$ (divided by $\deg (L/k(X))$ we obtain that $C\otimes \zll(\spe k(X))$ is a retract of  $C\otimes \zll(\spe L)$ (we define the latter similarly to $C(\spe k(X))$). Hence there exists a discrete valuation ring $A'\subset L,\ L=kA$, such that the image of $s$ in  $C\otimes \zll(\spe L)$ does not come from $C\otimes \zll(\spe A')$. Then $A=A'\cap k(X)$ is a discrete valuation ring also, and $s\otimes 1$ does not come from $C\otimes \zll(\spe A)$. The proof is finished.

\end{proof}

\begin{rema}
Actually, one can generalize the proposition to the calculation of unramified cohomology with coefficients in any cohomology theory $\dmge\to \au$, where $\au$ is an abelian category satisfying AB5. To this end one should replace the corresponding $t$-truncations of $C$ by  {\it virtual $t$-truncations} (of the cohomological functor 'represented' by $C$) with respect to the {\it Gersten} and Chow weight structures (for {\it comotives}; all of the notions mentioned were defined and studied in \cite{bger}).  
Yet such a generalization would be somewhat 'tautological'.
\end{rema}

\subsection{Duality in $\dmgep$; motives with compact support}\label{smgc}

Applying an argument of Levine described in Appendix B of \cite{hubka},
we obtain that the full subcategory of $\dmgmp$ generated by $\chowp$
(i.e. the whole $\dmgmp$) enjoys a perfect duality such that the dual of
$\mgp(P)$ for $P\in \spv$ is $\mgp(P)(-m)[-2m]$ if $P$ is purely of
dimension $m$.

The only original statement that we will prove here is the following one.

\begin{pr}\label{pdual}
The dual of $\dmgmp^{w_{\chow}\le 0}$ with respect to this duality is
$\dmgmp^{w_{\chow}\ge 0}$, and vice versa.
\end{pr}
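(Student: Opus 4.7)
Let $D\colon \dmgmp^{op}\to \dmgmp$ denote Levine's duality functor. My plan is first to check that $D$ restricts to a self-equivalence of the heart $\chowp=\hw_{\chow}$, and then to leverage the boundedness of $w_{\chow}$ to promote this to a swap of $\dmgmp^{w_{\chow}\le 0}$ and $\dmgmp^{w_{\chow}\ge 0}$.

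For the first step, I would use the explicit formula $D(\mgp(P))=\mgp(P)(-m)[-2m]$ for $P\in \spv$ of pure dimension $m$. By Remark \ref{rttw} the autoequivalence $-\otimes \zop(1)[2]$ is weight-exact on $\dmgmp$; iterating, $\mgp(P)(-m)[-2m]\in \chowp$. Since $\chowp$ is the Karoubi closure of $\{\mgp(P)(i): P\in\spv,\ i\in\z\}$ in $\dmgmp$ and $D$ respects direct summands (being an equivalence), we obtain $D(\chowp)\subset \chowp$; applying $D$ again and using $D\circ D\simeq \id_{\dmgmp}$ yields $D(\chowp)=\chowp$.

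For the second step, recall that $D$ is contravariant exact, hence $D\circ [1]=[-1]\circ D$ and it transforms any distinguished triangle $A\to B\to C\to A[1]$ into a distinguished triangle $D(C)\to D(B)\to D(A)\to D(C)[1]$. In particular, $D(\hw_{\chow}[n])=D(\hw_{\chow})[-n]\subset \chowp[-n]\subset \dmgmp^{w_{\chow}\ge 0}$ for every $n\ge 0$. Since $w_{\chow}$ is bounded (Proposition \ref{pwcgm}(2)), Proposition \ref{pbw}(\ref{ibougen}) together with Karoubi-closedness gives that $\dmgmp^{w_{\chow}\le 0}$ is built by extensions and retracts from $\cup_{n\ge 0}\hw_{\chow}[n]$; and $\dmgmp^{w_{\chow}\ge 0}$ is extension-stable and Karoubi-closed by Proposition \ref{pbw}(\ref{iext}) and Definition \ref{dwstr}(I)(i). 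Hence a straightforward induction on the extension depth (using the triangle-flipping property above) yields $D(\dmgmp^{w_{\chow}\le 0})\subset \dmgmp^{w_{\chow}\ge 0}$. The reverse inclusion follows from $D\circ D\simeq \id$, proving equality on both sides.

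The main obstacle is really just a bookkeeping check: one must be careful that contravariant exactness flips the sign of the shift correctly, so that non-positive weights go to non-negative ones and vice versa, and that the Tate twist does not spoil membership in the heart. Both points are immediate from the material already in the paper (Remark \ref{rttw} and Proposition \ref{pbw}), so once Levine's duality with its explicit action on $\chowp$ is in hand, no further substantial input is required.
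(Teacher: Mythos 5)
Your proof is correct and follows essentially the same route the paper sketches: Proposition \ref{pdual} is stated there as ``immediate'' from Proposition \ref{pbw}(\ref{ibougen}) together with the observation that the duality respects distinguished triangles and Chow motives, which is exactly the mechanism you spell out (Tate twist is weight-exact by Remark \ref{rttw}, so $D$ preserves the heart; contravariant exactness flips shifts; boundedness plus extension-stability then forces $D$ to swap $\dmgmp^{w_{\chow}\le 0}$ and $\dmgmp^{w_{\chow}\ge 0}$). One tiny imprecision worth noting: $\chowp$ is the Karoubi-closure of the \emph{additive} subcategory generated by the $\mgp(P)(i)$, not just of that set of objects, but since $D$ is an additive equivalence this does not affect the argument.
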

\begin{proof}

Immediate from Proposition \ref{pwc} and 
Proposition \ref{pbw}(\ref{ibougen}); note that this duality respects distinguished triangles and Chow motives.
\end{proof}

\begin{rema}
\label{rdual}
1. Certainly, proposition \ref{rdual}  implies that
$\widehat{\dmgmp^{[i,j]}}=\dmgmp^{[-j,-i]}$.

2. As explained in Appendix B of \cite{hubka}, using duality one can
define reasonable motives with compact support over $k$: for
$U\in\sv$  purely of dimension $m$ we set
$\mgp^c(U)=\widehat{\mgp(U)}(m)[2m]\in \obj \dmgep$.

So, we have $\mg^c(U)\in \dmgep^{[-\dim U,0]}$.

%compatible with weight complexes??

\end{rema}

%Proposition 5.26 of \cite{degdoc}

\end{document}